\def\zz\ignorespaces{\@ifnextchar-{}{\phantom{-}}}
\newcolumntype{C}{>{\zz}{c}}
\newcommand{\ignore}[1]{}
\newtheorem{theorem}{Theorem}[section]
\newtheorem{lemma}[theorem]{Lemma}
\newtheorem{corollary}[theorem]{Corollary}
\newtheorem{proposition}[theorem]{Proposition}
\newtheorem{algorithm}[theorem]{Algorithm}
\theoremstyle{definition}
\newtheorem{definition}[theorem]{Definition}
\newtheorem{example}[theorem]{Example}
\newtheorem{ass}[theorem]{Assumption}
\theoremstyle{remark}
\newtheorem{remark}[theorem]{Remark}
\numberwithin{equation}{section}
\newcommand{\bC}{\mathbb{C}}
\newcommand{\bZ}{{\mathbb{Z}}}
\definecolor{grey}{rgb}{0.75,0.75,0.75}
\definecolor{orange}{rgb}{1.0,0.5,0.5}
\definecolor{brown}{rgb}{0.5,0.25,0.0}
\definecolor{pink}{rgb}{1.0,0.5,0.5}
\newcommand{\fM}{{\mathfrak m}}
\newcommand{\fa}{\mathfrak{a}}
\newcommand{\cN}{{\mathcal N}}
\newcommand{\cO}{{\mathcal O}}
\newcommand{\K}{\mathcal{K}}
\newcommand{\Z}{\mathbb{Z}}
\newcommand{\C}{\mathbb{C}}
\newcommand{\m}{\mathfrak{m}}
\begin{document}

\title[Base points of ideals]{Effective computation of base points of ideals in two-dimensional local
rings}

\author[M. Alberich-Carrami\~nana]{Maria Alberich-Carrami\~nana}

\author[J. \`Alvarez Montaner]{Josep \`Alvarez Montaner}

\author[G. Blanco]{Guillem Blanco }

\address{Departament de Matem\`atiques\\
Univ. Polit\`ecnica de Catalunya\\ Av. Diagonal 647, Barcelona
08028, Spain} \email{Maria.Alberich@upc.edu, Josep.Alvarez@upc.edu,
gblanco92@gmail.com}

\thanks{ All three authors are supported by Spanish Ministerio de
Econom\'ia y Competitividad MTM2015-69135-P. MAC and JAM are also
supported by Generalitat de Catalunya 2014SGR-634 project and they are with the Barcelona Graduate School of Mathematics
(BGSMath). MAC is
also with the Institut de Rob\`otica i Inform\`atica Industrial
(CSIC-UPC)}

\keywords {Minimal log-resolution, weighted clusters, Newton-Puiseux algorithm.  }


\begin{abstract}
We provide an algorithm that allows to describe the minimal log-resolution of an
ideal in a smooth complex surface from the minimal log-resolution of its generators.
In order to make this algorithm effective we present a modified version of the
Newton-Puiseux algorithm that allows to compute the Puiseux decomposition of a product
of not necessarily reduced or irreducible elements together with their algebraic
multiplicity in each factor.

\end{abstract}

\maketitle

\section{Introduction} \label{sec1}

Let $(X,O)$ be a germ of smooth complex surface and  $\cO_{X,O}$ the
ring of germs of holomorphic functions in a neighborhood of $O$,
which we identify with $\C\{x,y\}$ by taking local coordinates.
Let $\fa \subseteq \cO_{X}$ be an ideal sheaf. From now on, if no confusion arises, we will indistinctly denote
 by $\fa$  the sheaf ideal or its stalk at $O$. In this later case
 we will be considering an ideal $\fa \subseteq \C\{x,y\}$.
 We also denote $\fM=\fM_{X,O}\subseteq \C\{x,y\}$ the maximal ideal.

\vskip 2mm

A {\it
log-resolution} of the pair $(X,\fa)$, or a log-resolution of $\fa$
for short, is a proper birational morphism $\pi: X' \rightarrow X$
such that $X'$ is smooth,  the preimage of $\fa$ is locally
principal, that is $\fa\cdot\cO_{X'} = \cO_{X'}\left(-F\right)$ for
some effective Cartier divisor $F$, and $F+E$ is a divisor with
simple normal crossings support where $E = Exc\left(\pi\right)$ is
the exceptional locus.
We point out that $F$ decomposes into its affine and exceptional part $F=F_{\rm aff} + F_{\rm exc}$
according to its support. In particular  $F= F_{\rm exc}$ when $\fa$ is $\fM$-primary.

\vskip 2mm

In order to describe the divisor $F$ we will use the theory
of {\it weighted clusters} developed by Casas-Alvero in \cite{Cas00}.
Namely, any log-resolution  is a composition of blow-ups of
points infinitely near to $O$. Hence, attached to $F$, there is a
pair $\K=(K, v)$ where $K$ is the set of infinitely near points that
have been blown-up and $v:K \longrightarrow \bZ$ is a valuation map that encodes the
coefficients of the exceptional components in $F$. If $E_i$
is the exceptional divisor that arises from the blowing-up of a
point $p_i$, we have $F_{\rm exc}=\sum_i d_i E_i$ where $v(p_i)=d_i$.
The {\it weighted cluster of base points }
 $BP(\fa)=(B, \beta)$ of an ideal $\fa$ is the weighted cluster associated to the \emph{minimal}
 log-resolution of $\fa$. Indeed, the version that we present in this work is a mild generalization
of the construction given in \cite{Cas00} for the case of $\fM$-primary ideals (see Remark \ref{base_point_ideal}).
Roughly speaking, it is the cluster of base points of $\fa$ weighted by the values of generic
elements of $\fa$.

\vskip 2mm

The aim of this work is
to provide an algorithm that describes the minimal log-resolution of any ideal $\fa$
or equivalently, its associated { weighted cluster of base points } $BP(\fa)$.
If $\fa=(f)$ is a principal ideal with $f\in \C\{x,y\}$, the minimal
log-resolution of $\fa$ equals the minimal log-resolution of the
reduced curve $\xi_{\rm red}$ of $\xi: f=0$. Indeed, computer
algebra systems such as {\tt Singular} \cite{DGPS} or {\tt Magma} \cite{Magma} can
compute the divisor $F$ whenever $f$ is reduced.

\vskip 2mm

If the ideal $\fa=(a_1,\dots,a_r)\subseteq \C\{x,y\}$ is not principal,
 the minimal log-resolution $\pi$ of $\fa$ is no longer straightforwardly deduced from the
minimal log-resolutions $\pi_i: X'_i \longrightarrow X$ of the
principal ideals $\fa_i=(a_i)$ corresponding to each generator.
Neither $\pi$ dominates any $\pi_i$, nor the minimal proper
birational morphism $\pi':Y\longrightarrow X$ dominating all
$\pi_i$, which is the minimal log-resolution of the principal ideal
$(a_1\cdots a_r)$, dominates $\pi$.
Clearly, $\pi, \pi'$ and any $\pi_i$ factor through the blow-up of
the {\it infinitely near points} that are common to all curves $a_i=0$. Apart from
this, no other inclusion between infinitely near points attached to
$\fa$ and the principal ideals $\fa_i$ hold. In this work we will describe the minimal log-resolution of $\fa$ from the minimal log-resolution of its generators. We will not
only provide the infinitely near points which must be blown-up and blown-down to
reach $\pi$ from those of $\pi_i$, but we will also describe the
divisor $F$ in terms of the divisors $F_i$, with $\fa_i\cdot\cO_{X'}
= \cO_{X'}\left(-F_i\right)$.

\vskip 2mm

The structure of the paper is as follows. In Section \ref{sec2} we review the basics on the theory
of weighted clusters. In particular, we introduce the weighted cluster of base points $BP(\fa)$ associated
to any ideal $\fa=(a_1,\dots,a_r) \subseteq \bC\{x,y\}$. Our definition is a mild generalization of the weighted
cluster defined by Casas-Alvero in \cite[\S7.2]{Cas00} for the case of $\fM$-primary ideals. Indeed, we have
a decomposition $\fa= (g) \cdot \fa'$, where $g\in \bC\{x,y\}$ is the greatest common divisor of the generators of $\fa$
and $\fa'$ is $\fM$-primary. From the weighted cluster $BP(\fa')$ and the cluster of {\it singular} points
of the reduced germ $\eta_{\rm red}$ of $\eta: g=0$ we can describe $BP(\fa)$ (see Section \ref{cluster_log}).

\vskip 2mm

In Section \ref{sec3} we provide an
alternative description of $BP(\fa)$ that will be more useful for our purposes. The advantage
is that the {\it virtual values} of the weighted cluster depend on the values of the generators of the ideal.
In Section \ref{sec4} we present the main result of this paper. Namely, we provide an algorithm
(see Algorithm \ref{A1}) that allows to compute $BP(\fa)$ for any given ideal $\fa$.
The idea behind our method is to give a first approximation of $BP(\fa)$ by means of a
weighted cluster associated to the product of the generators of the ideal $\fa'$ and $g$.
Then, using some technical results developed in Section \ref{sec41}, we construct some intermediate
weighted clusters that lead to the desired result.

\vskip 2mm

In Section \ref{sec5} we provide a generalization of the Newton-Puiseux algorithm that makes Algorithm \ref{A1} effective. The main feature of this version is that, given a set of
elements $f_1, \dots, f_r \in \mathbb{C}\{x, y\}$ not
necessarily reduced or irreducible, it computes the Puiseux decomposition of the product
$f = f_1 \cdots f_r$, that is, the Puiseux series of $f$ along with their algebraic
multiplicities in each of the factors $f_1, \dots, f_r$. Indeed, our method  provides all the information
needed to recover both the decomposition of each factor and the decomposition of the whole product at the same time.
One of the key ingredients is to use the \emph{square-free factorization} of $f$.

\vskip 2mm

Finally, we would like to mention that a log-resolution of an ideal is a sort of principalization.
An alternative approach to the problem of principalization was given by Cassou-Nogu\`es and Veys in \cite{CNV14}.
They describe an algorithm that transforms an ideal $\fa=(a_1,\dots,a_r) \in \bC\{x,y\}$ into
a principal one by means of {\it Newton maps}. At the $i$-th  step of the algorithm, the Newton map is
determined from the initial forms of the transformed generators at the $(i-1)$-th step.
However, no connection is shown between the transformations of the principal ideals $(a_i)$ defined
by each generator and the transformation of the whole ideal $\fa$. In contrast, our approach provides such a relationship. Furthermore, we not only describe the divisor $F$ from the divisors $F_i$, but our procedure also allows a {\it topological} generalization avoiding the requirement of explicit generators: given the equisingularity class of the generators $a_i$ and the contacts between any pair of branches of different generators, we explicitly describe the equisingularity class of a generic element of $\fa$.

\section{Preliminaries} \label{sec2}

Let $X$ be a smooth complex surface and $\cO_{X,O}\cong \C\{x,y\}$
the ring of germs of holomorphic functions in a neighborhood of a
smooth point $O\in X$. Consider  a
sequence of blow-ups above $O$,
$${\displaystyle \pi:X'=X_{r+1}\xrightarrow{}X_{r}\xrightarrow{}\cdots \xrightarrow{}X_1=X},$$
with  $X_{i+1}={\rm Bl}_{p_i} X_i$ for a point $p_i\in X_i$ blowing-down to $O\in X$.

\vskip 2mm

Let ${\rm Div}(X')$ be the group of integral divisors in $X'$, i.e.
divisors of the form $D = \sum_i d_i E_i$ where the $E_i$ are
pairwise different (non necessarily exceptional) prime divisors and
$d_i \in \bZ$. Among them, we will consider divisors in the lattice
$\Lambda_\pi:= \bZ E_1 \oplus \cdots \oplus \bZ E_r$ of exceptional
divisors and we will simply refer them as divisors with {\it
exceptional support}. We have two different basis of this $\bZ$-module
given by the {\it total transforms} and the {\it strict transforms}
of the exceptional components. For simplicity, we will also denote
the strict transforms by $E_i$ and the total transforms by $\overline{E_i}$.
In particular, any divisor $D_{exc}\in  \Lambda_\pi$ can be presented in
two different ways
$$D_{\rm exc} =\sum_{i=1}^r v_i E_i =\sum_{i=1}^r e_i \overline{E_i},$$
where the weights $v_i$ (resp. $e_i$) are the {\it values} (resp.
{\it multiplicities}) of $D_{\rm exc}$. In general, any divisor  $ D\in {\rm
Div}(X')$ has a decomposition $D=D_{\rm exc} + D_{\rm aff}$  into
its  {\it exceptional} and  {\it affine} part according to its
support.

\vskip 2mm

The relation between  values and multiplicities is given by the
combinatorics of the configuration of exceptional divisors. To such
purpose we will use the theory of {\it infinitely near points}.
Roughly speaking, the exceptional divisors will be parameterized by
the sequence of points $p_i \in E_i \subseteq X_{i+1}$ that are
blown-up to achieve $\pi:X' \longrightarrow X$.

\vskip 2mm

A point $p$  infinitely near  to
the origin $O$ is a point lying on the
exceptional divisor of the composition  of a finite sequence of blow-ups.
The set $\cN_O$ of points $p$  infinitely near to $O$ can be viewed as the
disjoint union of the exceptional divisors appearing at successive
blow-ups above $O$.  The points in $X$ will be called \emph{proper}
points in order to distinguish them from the infinitely near ones. The set
$\cN_O$ is endowed with a partial order relation~$\leq$ defined by $p
\leq q$ if and only if $q$ is infinitely near to $p$. In this case
we will say that $p$ \emph{precedes} $q$.

\vskip 2mm

Given two points $p \leq q$ infinitely near to $O$, we say that
$q$ is \emph{proximate} to $p$ if and only if $q$ belongs to the
exceptional divisor $E_p$ as proper or infinitely near point. We
will denote this relation by $q\rightarrow p$. By construction, an
infinitely near point $q$ is proximate to just one or two points. In
the former case we say that $q$ is a \emph{free point}, in the later
it is a \emph{satellite point}.

\vskip 2mm

Let $p_1=O$ and $p_2, \dots, p_r $ be the infinitely near points that
appear in the successive blow-ups composing $\pi:X' \longrightarrow X$.
Let $D=\sum_{i=1}^r e_i \overline{E_i} =\sum_{i=1}^r v_i E_i$ be a divisor
with exceptional support. Then, we have the following relation:
\begin{equation} \label{eq:proximity}
v_{i}= e_i + \sum_{p_i \rightarrow p_j} v_{j}.
\end{equation}
Indeed, we can encode all the proximity relations in
the {\it proximity matrix} ${P}=(p_{i,j})$ defined as:
$$p_{i,j}:= \left\{
\begin{array}{rcl}
 1 & \hskip 1mm & \mbox{if } j=i, \\
-1 & \hskip 1mm & \mbox{if } \ p_i\rightarrow p_j, \\
 0 & \hskip 1mm & \mbox{otherwise.}
\end{array} \right.
$$
Then, the vectors   ${\bf e}=(e_1,\dots, e_n)$  and ${\bf v}=(v_1,\dots, v_n)$ of  multiplicities
and values satisfy the base change formula ${\bf e}^\top ={P} \cdot {\bf v}^\top$.

\begin{remark}
 We may also consider the {\it intersection matrix } $N=(n_{i,j})$ defined as:
$$n_{i,j}:= \left\{
\begin{array}{l}
1  \hskip 2cm  \mbox{if } E_i\cap E_j\neq \emptyset, \ \ i\neq j, \\
0 \hskip 2cm   \mbox{if } E_i\cap E_j = \emptyset, \ \ i\neq j, \\
-r_i-1 \hskip .8cm \mbox{if } i=j.
\end{array} \right.
$$ where $r_i$ is the number of points  proximate to $p_i$. Then, $N=-P^\top P$.
\end{remark}

\subsection{Weighted clusters of infinitely near points} \label{sec21}
In order to describe divisors with exceptional support we will use
the theory of weighted clusters of infinitely near points developed
by Casas-Alvero in \cite{Cas00}. In the sequel, we will fix all the
basic notions that we will use  but we encourage the interested
reader to take a look at \cite{Cas00} in order to get a deeper
insight.

\vskip 2mm

A \emph{cluster} is a finite subset $K \subsetneq \cN_O$  of infinitely near points to the origin $O$ such that, if $p \in K$, then any preceding point $q < p$ also belongs to $K$.
A \emph{weighted cluster} $\K=(K,e)$ is a cluster $K$, called the {\it underlying cluster} of $\K$,  together with a
map $e:K\longrightarrow\Z$, where $e_p:= e(p)$ is the  \emph{virtual multiplicity} of $\K$ at $p$.
Alternatively, we may also weight a cluster by a system of \emph{virtual values} $v_p:= v(p)$  given by a map
$v:K\longrightarrow\Z$. Both multiplicities and values are related recursively by
\begin{equation} \label{eq:mult_val}
v_{p}= e_p + \sum_{p \rightarrow q} v_{q},
\end{equation}
and the reason for maintaining this apparent redundancy will become clear after Section \ref{cluster_log}.
\emph{Consistent clusters} are those weighted clusters with non-negative excesses,
where the \emph{excess} of $\K$ at $p_i$ is
$$\rho_p=e_p -\sum_{{q\rightarrow p}}e_q.$$
We say that it is \emph{strictly consistent} if, furthermore, $e_p>0$ for all $p\in K$.

\vskip 2mm

We define the sum of two weighted clusters $\K=(K,e)$ and $\K'=(K',e')$ weighted by multiplicities
as $\K + \K'=(K\cup K',e + e')$ extending $e_p=0$ for $p \not\in K'\backslash K$ and
$e'_p=0$ for $p \not\in K\backslash K'$.

\vskip 2mm

Let $\pi_{K}: X' \longrightarrow X$ be the sequence of blow-ups centered at the points of the weighted cluster $\K$.
Let $\xi: f=0$ be a germ of plane curve defined by $f \in \cO_{X,O}$. The \emph{total transform} of
$\xi$ is the pull-back $\overline{\xi}:=\pi_K^*\xi$ and
can be written as\footnote{We use the notation $\overline{E_p}$ and $E_p$ to emphasize that the exceptional component corresponds to
the point $p\in K$.}
$$\overline{\xi}= \widetilde{\xi} +
\sum_{p \in K} e_p (f) \overline{E_p}=
\widetilde{\xi} + \sum_{p \in K} v_p (f) E_p,$$ where $e_p (f)$ (resp. $v_p
(f)$) is the multiplicity (resp. value) of $\xi$ at $p$, and
$\widetilde{\xi}$ is the \emph{strict transform} that can be also
realized as the closure of $\widetilde{\xi}:=\pi^{-1}(\xi-\{O\})$. Neither $e_p (f)$ nor $v_p
(f)$ depend on the equation $f$ defining the curve $\xi$. The {\it virtual
transform} with respect to the weighted cluster $\K$ is defined as
$$\check{\xi} := \widetilde{\xi} + \sum_{p \in K} (v_p (f)-v_p) E_p.$$
If $v_p (f)\geq v_p$, for all $ p\in K$ then, we say that the curve $\xi$ {\it goes virtually}
through $\K$. It {\it goes sharply} through $\K$ when $v_p (f)= v_p$, for all $p\in K$ and $\xi$ has no singular, see Section \ref{cluster_log}, points outside those in $\K$.

\vskip 2mm

Let $\K_{\leq p}$ be the weighted subcluster consisting on all the points preceding a given point $p\in K$
with the same weights as $\K$. If we denote by $\check{\xi}_p$ the virtual transform with respect to $\K_{\leq p}$,
then, for all $p\in K$, we have
\begin{equation} \label{eq:virtual_p}
e_p(\check{\xi}_p)=v_p(f) - \sum_{p\to q} v_q.
\end{equation}

\subsection{The weighted cluster associated to the minimal log-resolution} \label{cluster_log}
We have a correspondence between clusters of infinitely near points to the origin $O\in X$ and divisors
with exceptional support in $X'$, where  $\pi: X' \rightarrow X$ is a sequence
of blow-ups centered at the points of the cluster. Namely, we have a correspondence
between a cluster $\K=(K,v)$ and a divisor $D_\K=\sum_{p\in K} v_p {E_p} \in \Lambda_\pi$.

\vskip 2mm

In the case that $\pi: X' \rightarrow X$ is the minimal log-resolution
of an ideal $\fa \subseteq \C\{x,y\}$ we are going to describe the weighted cluster associated to the
exceptional part of the Cartier divisor $F$ such that $\fa\cdot\cO_{X'} = \cO_{X'}\left(-F\right)$.

\vskip 2mm

$\bullet$ {\bf The case of reduced plane curves:} Consider a principal ideal
generated by  a reduced function $f\in \C\{x,y\}$. It defines a germ
of curve $\xi: f= 0$ at $O$, whose {\it branches} are the germs
defined by the irreducible factors of $f$.

\vskip 2mm

The affine part $F_{\rm aff}$ of the divisor $F$ corresponds to the
strict transform $\widetilde{\xi}$ of the curve. The weighted
cluster associated to the divisor $F_{\rm exc}$ will be denoted
$\mathcal{S}(\xi)$ and coincides with the weighted cluster defined in \cite[\S
3.8]{Cas00}. Namely, consider the set $\cN_O(\xi)$ of points $p\in\cN_O$ infinitely
near to the origin $O$ \emph{lying on} $\xi$, i.e. those points such
that $e_p(f) >0$. Such a point is \emph{simple} (resp.
\emph{multiple}) if $e_p(f) =1$ (resp. $e_p(f) >1$). It is
\emph{singular} if it is either multiple, satellite or precedes a
satellite point lying on $\xi$. Then, $\mathcal{S}(\xi)$ is the weighted
cluster of singular points, weighted by the multiplicities, or the
values, of $\xi$. It is a strictly consistent cluster since it satisfies the
\emph{proximity equalities} (see \cite[3.5.3]{Cas00}).
$$e_{p}(f)= \sum_{q \rightarrow p} e_{q}(f).$$

\vskip 2mm

$\bullet$ {\bf The case of ideals:} Given an ideal $\fa=(a_1,\dots,a_r) \subseteq \bC\{x,y\}$
we have a decomposition  $\fa= (g) \cdot \fa'$, where $g\in \bC\{x,y\}$  is the greatest common
divisor of the generators of $\fa$ and  $\fa' = (f_1, \dots, f_r)$ is $\fM$-primary.

\vskip 2mm

In the case that $\fa$ is indeed $\fM$-primary we have that the divisor $F=F_{\rm
exc}$ only has exceptional support.  Its associated weighted cluster
is the weighted cluster of {\it base points} $BP(\fa)$ defined in
\cite[\S7.2]{Cas00}. It consists of the points shared by the curves
defined by generic elements of $\fa$  weighted by the corresponding
multiplicities or values.

\vskip 2mm

In the general case, we have a decomposition $F=F_{\rm aff}+F_{\rm
exc}$, so we will have to treat the affine and the exceptional
part of the divisor $F$ separately.  To describe the affine part, consider a decomposition of $g$
into its irreducible factors $g= g_1^{a_1} \cdots g_t^{a_t}$. Then, we have
$F_{\rm aff} = a_1 E_1 + \cdots + a_t E_t$, where $E_i$ is the strict transform on $X'$ of
the irreducible germ $\eta_i: g_i = 0$.
In order to describe the weighted cluster associated to the
exceptional part $F_{\rm exc}$ we will give the following  natural generalization
of the cluster of base points considered in \cite[\S7.2]{Cas00}:

\vskip 2mm

Define the multiplicity of the ideal at $O$ as $e _O(\fa)= e = \min
\{e_O(f) \, | \,  f \in \fa\}$. If $p \in E_O$ is any
point in the first neighborhood of $O$, the pull-back of functions
induces an injective homomorphism of rings $\varphi_p:
\mathcal{O}_{X,O} \longrightarrow \mathcal{O}_{X_p,p}$. We consider
the ideal defined as $\fa_p=z^{-e} \varphi_p(\fa)$ (where $z$ is
any equation for the exceptional divisor $E_O$ near $p$), that is,
generated by the germs $\check{\xi}_p : z^{-e} \varphi_p(f)=0$, i.e.
the virtual transforms of $\xi : f=0$, $f \in \fa$. These
definitions can be extended to any $p \in \mathcal{N}_O$, as well as
the multiplicity $e_p(\fa) = e_p(\fa_p)$. Since the germs $\xi :
f=0$, $f \in \fa$, have no fixed part outside $\eta: g=0$, the set of
points such that $0 < e_p(\fa) \not =  e_p(g)$   is finite, and
hence it is a cluster. We then add to this weighted cluster the weighted cluster $\mathcal{S}(\eta_{\rm red})$ of
singular points of the reduced  germ $\eta_{\rm red}$, (weighted by
the multiplicities of the ideal $\fa$ in those points) giving a larger cluster that
we also refer as the  {\em weighted cluster of base points} of $\fa$ and
denote by $BP(\fa)$. Observe that $BP(\fa)$ does not depend on the
decomposition $\fa=(g)\cdot \fa'$, namely on the unit affecting
either $g$ or  the  generators of $\fa'$.

\vskip 2mm

\begin{remark}\label{base_point_ideal}
Let  $BP(\fa')= (K', e')$ be the cluster of base points
of the $\fM$-primary ideal $\fa'$ and let $\mathcal{S}(\eta_{\rm
red})= (S, e'')$ be the cluster of singular points of the reduced germ associated to $\eta: g=0$. Then,
the cluster $BP(\fa) = (B, e)$ can be also described as follows:

\begin{itemize}
\item[$\cdot$] $B=K' \cup S$.
\item[$\cdot$] $e_p= e'_p+ e_p(g)$, extending $e'_p=0$ for $p$ outside $K'$.
\end{itemize}

In particular, we have to consider the multiplicities $e_p(g)$ of the non-reduced
germ $\eta$ instead of the multiplicities $e_p''$ of  $\eta_{\rm red}$. In fact, $BP(\fa)=BP((g)) + BP(\fa')$.
If we weight $BP(\fa) = (B, \beta)$ and  $BP(\fa')= (K', v')$ using values, then
we have $\beta_p= v'_p+ v_p(g)$.

\vskip 2mm

We point out that, in the case of $\fM$-primary ideals, this description coincides
with the one given in \cite[\S7.2]{Cas00}. Hence, we
recover easily the properties in loc.cit. also in this
case. Namely, $BP(\fa)$ is strictly consistent and all germs $\xi : f=0$, $f \in \fa$, go virtually
through $BP(\fa)$. Generic germs go sharply through it, they miss
any fixed finite set of points not in $BP(\fa) \cup \mathcal{S}(\eta_{\rm red})$, in particular
have the same equisingularity class, and their fixed part reduces to $g$, having no multiple factors
outside those of $g$. Furthermore, the ideal $\fa$ may be generated by a finite system of generators
defining germs going sharply through $BP(\fa)$.
\end{remark}

\vskip 2mm

\section{An alternative characterization of the weighted cluster of base points of an ideal} \label{sec3}
The aim of this section  is to give an alternative characterization of the weighted cluster
of base points  $BP(\fa)$ associated to any ideal $\fa \subseteq \bC\{x,y\}$.
This new approach will be more suitable for our purposes in the rest of this work. For the sake
of simplicity in the notations, we will assume that  $\fa =(f_1, \dots, f_r)$ is $\fM$-primary
(see Remark \ref{base_point_ideal}). Then, we will describe a weighted cluster $\mathcal{K} =(K, v)$
with virtual values $v$ depending  on the values of the curves $\xi_i: f_i = 0, i = 1, \dots, r$ and we will then prove that it  is equal to $BP(\fa)$.

\vskip 2mm

\begin{definition}\label{cluster2}
Let $\fa = (f_1, \dots, f_r) \subseteq \bC\{x,y\}$ be an $\m$-primary ideal. For any point $p\in \cN_O$ equal or infinitely near to $O$
we define the value  $v_{p} := \min\{v_p(f_1), \dots, v_p(f_r)\}, $ and recursively on the proximate points, we define
$h_{O} = 0$ and $$h_{p}:= \sum _{p
\rightarrow q} v_{q}.$$
Define also the weighted cluster $\mathcal{K} =(K, v)$, where
 $K$ is the set of points $p \in \cN_O$  such
that $h_{p} < v_{p}$. The corresponding virtual multiplicities are defined
as $e_p=v_p - h_p$, for each $p \in K$.
\end{definition}

\vskip 2mm

To be a proper weighted cluster, we need to check that $K$ is finite
and that for any $p\in K$, all the preceding points also belong to $K$.
To do so, we will start with a technical lemma.

\begin{lemma} \label{lemma:lemma0}
Let $\fa = (f_1, \dots, f_r) \subseteq \bC\{x,y\}$ be an $\m$-primary ideal. Then, for any
 $f \in \fa$ and any $p\in \cN_O$, we have
$ v_p(f) \geq \min\{v_p(f_1), \dots, v_p(f_r)\}.$
\end{lemma}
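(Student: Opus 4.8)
The plan is to show that the value $v_p(f)$ of any element $f\in\fa$ at an infinitely near point $p$ cannot be smaller than the minimum of the values of the generators. The key observation is that $v_p(-)$, as a function on $\cO_{X,O}$, is essentially a valuation: it is the order of vanishing along the exceptional divisor $E_p$ of the virtual/total transform, suitably interpreted. More precisely, for a fixed $p\in\cN_O$, consider the discrete valuation $v_p$ on the field of fractions of $\cO_{X,O}$ obtained by composing the local ring $\cO_{X_p,p}$ with the valuation given by order along $E_p$; then $v_p(f)$ as defined via total transforms agrees with this valuation applied to $f$ (this is the standard fact that $e_p$ and $v_p$ read off from $\overline{\xi}$ are intrinsic to $f$, noted in the excerpt right after the definition of virtual transform). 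Granting that, the statement is just the ultrametric inequality for a valuation applied to a sum.

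Concretely, I would first write $f = \sum_{i=1}^r \lambda_i f_i$ with $\lambda_i\in\cO_{X,O}$, which is possible since $f\in\fa=(f_1,\dots,f_r)$. Next, for each $i$, I would argue that $v_p(\lambda_i f_i) = v_p(\lambda_i) + v_p(f_i) \ge v_p(f_i)$, using that $v_p$ is additive on products (the total transform of a product is the sum of total transforms) and that $v_p(\lambda_i)\ge 0$ because $\lambda_i$ is a holomorphic germ, hence has a genuine (effective) total transform, so all its multiplicities and values are nonnegative — equivalently, $\lambda_i\in\cO_{X_p,p}$ lies in the valuation ring of $v_p$. Then I would invoke the non-archimedean inequality $v_p\!\left(\sum_i \lambda_i f_i\right)\ge \min_i v_p(\lambda_i f_i)$, valid because $v_p$ is a valuation, to conclude
$$
v_p(f)\ \ge\ \min_{1\le i\le r} v_p(\lambda_i f_i)\ \ge\ \min_{1\le i\le r} v_p(f_i)\ =\ \min\{v_p(f_1),\dots,v_p(f_r)\}.
$$

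Alternatively, if one prefers to stay within the divisorial language of the paper rather than invoking valuation theory abstractly, one can argue geometrically at the surface $X_p$ where $p$ becomes a proper point: the virtual transforms $\check\xi_i$ all pass through $p$, $f/f_i$ (for an index $i$ achieving the minimum, up to reordering) is regular there, and intersection-multiplicity monotonicity under specialization gives the same bound; but this is just a repackaging of the valuation argument, so I would present the valuation version as the cleaner route.

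The main obstacle — really the only subtlety — is justifying rigorously that $v_p$ is a valuation on $\cO_{X,O}$ (additive on products, ultrametric on sums) and that it is nonnegative on all of $\cO_{X,O}$, rather than just a combinatorial gadget attached to curves. This follows from the identification of $v_p(f)$ with $\operatorname{ord}_{E_p}\big(\pi_{K}^*(f)\big)$ where the composite morphism includes the blow-up at $p$, together with the fact that pulling back a holomorphic function yields a holomorphic (hence regular along $E_p$) function; once this identification is in place, additivity and the ultrametric inequality are immediate from the corresponding properties of order of vanishing along a prime divisor. I would state this identification explicitly, citing the discussion of total transforms in Section~\ref{sec2} (and \cite{Cas00} if a reference is wanted), and then the inequality is a one-line consequence.
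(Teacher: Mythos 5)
Your proof is correct and follows essentially the same route as the paper: write $f=\sum_i \lambda_i f_i$, use that $v_p$ is a discrete valuation on $\bC\{x,y\}$ (for which the paper simply cites \cite[\S 4.5]{Cas00}) to get $v_p(f)\ge\min_i\{v_p(\lambda_i)+v_p(f_i)\}$, and conclude via $v_p(\lambda_i)\ge 0$. The extra discussion you give to justify that $v_p$ is a nonnegative valuation is the content the paper outsources to the reference, so the two arguments coincide.
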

\begin{proof}
Assume that $f = g_1 f_1 + \cdots + g_r f_r,$ for $g_1, \dots, g_r
\in  \bC\{x,y\}.$ Using the fact that  $v_{p}(\cdot)$ is a discrete valuation in
$ \bC\{x,y\}$ (see \cite[\S 4.5]{Cas00}), we have:
\begin{align*}
v_p(f) = &\ v_p(g_1 f_1 + \cdots + g_r f_r) \geq \min \{ v_p(g_1 f_1), \dots, v_p(g_r f_r) \} \nonumber \\
       = & \min_i \{ v_p(g_i) + v_p(f_i) \} \geq \min \{ v_p(f_1), \dots, v_p(f_r) \},
\end{align*}
where in the last inequality we used that $v_p(g) \geq 0, \forall g
\in  \bC\{x,y\}$.
\end{proof}

Next, we  prove that the definition of the weighted cluster
$\mathcal{K}=(K,v)$ does not depend on the generators of the ideal.

\begin{lemma} \label{lemma:generators}
Let $\fa = (f_1, \dots, f_r) \subseteq \bC\{x,y\}$ be an $\m$-primary ideal.
The virtual values of the weighted cluster
$\mathcal{K}=(K,v)$ associated to $\fa$ do not depend on the generators of the ideal.
In other words, $v_p = \min_{f \in \fa} \{v_p(f)\}$.
\end{lemma}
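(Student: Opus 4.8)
The plan is to prove the ostensibly stronger identity $v_p = \min_{f \in \fa}\{v_p(f)\}$ for every $p \in \cN_O$ (equal or infinitely near to $O$). Once this is established, the statement of the lemma is immediate: the right-hand side $\min_{f \in \fa}\{v_p(f)\}$ is manifestly intrinsic to the ideal $\fa$, involving no choice of a finite generating set, so $v_p$ — which by Definition \ref{cluster2} is computed from the generators $f_1, \dots, f_r$ — must in fact be independent of them.

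First I would check that $\min_{f \in \fa}\{v_p(f)\}$ is well defined. Since $v_p(\cdot)$ is a discrete valuation on $\bC\{x,y\}$ (see \cite[\S 4.5]{Cas00}), for nonzero $f \in \fa$ the value $v_p(f)$ is a non-negative integer, so $\{v_p(f) : f \in \fa,\ f \neq 0\}$ is a nonempty set of natural numbers and its minimum exists and is attained. (The zero element, carrying infinite value, plays no role.) Then the inequality $\min_{f \in \fa}\{v_p(f)\} \le v_p$ is automatic, because each generator $f_i$ belongs to $\fa$, whence $\min_{f \in \fa}\{v_p(f)\} \le \min_i\{v_p(f_i)\} = v_p$ by the definition of $v_p$. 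For the reverse inequality $\min_{f \in \fa}\{v_p(f)\} \ge v_p$ I would invoke Lemma \ref{lemma:lemma0}, which says precisely that $v_p(f) \ge \min\{v_p(f_1),\dots,v_p(f_r)\} = v_p$ for every $f \in \fa$. Combining the two inequalities yields $v_p = \min_{f \in \fa}\{v_p(f)\}$, and the lemma follows.

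I do not expect any real obstacle here: the only substantive input, namely the behaviour of $v_p$ on arbitrary $\bC\{x,y\}$-linear combinations of the generators, has already been isolated in Lemma \ref{lemma:lemma0}, and what remains is the two-sided inequality above together with the trivial remark that a minimum over all of $\fa$ cannot depend on how $\fa$ is presented. The one point worth stating explicitly, to keep the argument clean, is that the minimum over $\fa$ is taken over a set of non-negative integers and is therefore genuinely attained, so that the equality $v_p = \min_{f\in\fa}\{v_p(f)\}$ is an equality of integers rather than merely of infima.
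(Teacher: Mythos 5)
Your proposal is correct and follows essentially the same route as the paper: one inequality is immediate from $\{f_1,\dots,f_r\}\subseteq\fa$, the other is Lemma \ref{lemma:lemma0}, and the conclusion follows. The extra remark about the minimum being attained over a set of non-negative integers is sound but not something the paper spells out.
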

\begin{proof}
It is clear that $v_p = \min_i \{v_p(f_i)\} \geq \min_{f \in \fa} \{
v_p(f)\}$ since $\{f_1, \dots, f_r\} \subset \fa$. On the other hand,
by Lemma \ref{lemma:lemma0}, $v_p(f) \geq \min_i\{v_p(f_i)\},$ for all $f
\in \fa$, hence $\min_{ f \in \fa}\{v_p(f)\} \geq v_p$ and the result
follows.
\end{proof}

\begin{lemma} \label{lemma:lemma1}
Under the assumptions of Definition \ref{cluster2}, the inequality
$h_{p}\leq v_{p}$ holds for any point $p\in \cN_O$ equal or infinitely near to $O$.
\end{lemma}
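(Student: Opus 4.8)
The plan is to prove $h_p \le v_p$ by induction on the order of the infinitely near point $p$, i.e.\ on the number of blow-ups needed to reach $p$ from $O$. The base case $p = O$ is immediate since $h_O = 0$ while $v_O = \min_i\{v_O(f_i)\} \ge 0$, because each $f_i \in \bC\{x,y\}$ and $v_O$ is the ordinary multiplicity, which is non-negative. For the inductive step, fix $p \in \cN_O$ with $p > O$, and assume $h_q \le v_q$ for every point $q < p$.

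The key is to reinterpret $h_p = \sum_{p \to q} v_q$ in terms of a single curve. For each generator index $i$, the values $v_{q}(f_i)$ satisfy the proximity relation \eqref{eq:proximity} applied to the curve $\xi_i$, so the multiplicity of $\xi_i$ at $p$ is $e_p(f_i) = v_p(f_i) - \sum_{p \to q} v_q(f_i) \ge 0$. Hence $v_p(f_i) \ge \sum_{p \to q} v_q(f_i) \ge \sum_{p \to q} v_q = h_p$, where the second inequality uses $v_q(f_i) \ge v_q = \min_j\{v_q(f_j)\}$ for each $q$ (no induction is even needed here — this is just the definition of $v_q$ as a minimum). Taking the minimum over $i$ gives $v_p = \min_i v_p(f_i) \ge h_p$, which is exactly the claim.

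In fact this argument shows that the inductive hypothesis $h_q \le v_q$ for $q < p$ is not needed at all: the inequality $v_p(f_i) \ge \sum_{p\to q} v_q(f_i)$ is the statement that the (genuine) multiplicity $e_p(f_i)$ of the curve $\xi_i$ at $p$ is non-negative, which holds unconditionally, and then $\sum_{p \to q} v_q(f_i) \ge \sum_{p\to q} v_q$ follows termwise from the definition of $v_q$. So the proof reduces to: (1) non-negativity of the multiplicity $e_p(f_i)$, equivalently the proximity inequality for an honest plane curve germ, which is standard (see \cite[3.5.3]{Cas00} and \eqref{eq:proximity}); and (2) the termwise bound $v_q(f_i) \ge v_q$, which is immediate. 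The only subtlety to be careful about is the sign conventions and making sure that the proximate points $q$ appearing in the sum $h_p$ are precisely those with $p \to q$, so that \eqref{eq:proximity} matches the definition of $h_p$ in Definition~\ref{cluster2}; once that bookkeeping is set up correctly there is no real obstacle. One should also note that this lemma is exactly what is needed to conclude that $K$ is genuinely a cluster: $h_p \le v_p$ makes $e_p = v_p - h_p \ge 0$ well-behaved, and combined with Lemma~\ref{lemma:lemma0} it will force the defining condition $h_p < v_p$ to propagate downward to preceding points and to cut out a finite set.
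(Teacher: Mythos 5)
Your proof is correct and is essentially the paper's own argument: for each fixed generator $f_i$, non-negativity of $e_p(f_i)$ gives $v_p(f_i)\ge\sum_{p\to q}v_q(f_i)\ge\sum_{p\to q}v_q=h_p$, and minimizing over $i$ yields $v_p\ge h_p$. The paper splits into the free and satellite cases and phrases the final step via subadditivity of the minimum rather than fixing $i$ first, but this is a cosmetic difference, and as you observe no induction on preceding points is actually needed.
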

\begin{proof}
The inequality is clear when $p=O$. Now, assume that $p$ is free,
so it is proximate to one point $p \rightarrow q$. Then, we have:
\begin{equation*}
h_{p} = v_{q} = \min_i \{v_{q}(f_i)\} \leq \min_i \{ v_{p}(f_i) \} = v_{p}.
\end{equation*}
If $p$ is satellite, it is proximate to two points $p \rightarrow q$ and $p \rightarrow q'$. Then, we have:
\begin{eqnarray*}
h_{p} = v_{q} + v_{q'} & = & \min_i \{v_{q}(f_i)\}+ \min_i \{v_{q'}(f_i)\} \\
 & \leq & \min_i \{v_{q}(f_i)+v_{q'}(f_i)\}= \min_i \{ v_{p}(f_i) \} = v_{p}.
\end{eqnarray*}
\end{proof}

\begin{lemma} \label{lemma:lemma2}
Let $\fa = (f_1, \dots, f_r) \subseteq \cO_{X,O}$ be an $\m$-primary ideal.
If there exists a generator $f_i$ such that $h_{p} = v_{p}
=v_{p}(f_i)$, then we have $e_{p}(f_i)=0$ and $v_{q} = v_{q}(f_i)$ for
any point $q\in \cN_O$ such that $p \rightarrow q$.
\end{lemma}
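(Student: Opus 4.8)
The plan is to exploit the base–change (proximity) identity satisfied by the values and multiplicities of each individual curve $\xi_i : f_i = 0$, and to combine it with the minimality built into $v_q = \min_j v_q(f_j)$ through an elementary squeezing argument.

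First I would record, for the distinguished generator $f_i$ and the point $p$, the identity \eqref{eq:proximity} applied to the exceptional part of the total transform $\overline{\xi_i}$, namely
$$v_p(f_i) = e_p(f_i) + \sum_{p \to q} v_q(f_i),$$
where the sum runs over the one point (if $p$ is free) or two points (if $p$ is satellite) to which $p$ is proximate. Since $v_q = \min_j v_q(f_j) \le v_q(f_i)$ for every point $q$, adding these inequalities over the points $q$ with $p \to q$ yields $\sum_{p\to q} v_q(f_i) \ge \sum_{p\to q} v_q = h_p$.

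Next I would substitute the hypothesis $v_p(f_i) = v_p = h_p$ into the identity above to obtain
$$e_p(f_i) = v_p(f_i) - \sum_{p \to q} v_q(f_i) = h_p - \sum_{p \to q} v_q(f_i) \le 0.$$
As the multiplicity of a plane curve at an infinitely near point is non-negative, $e_p(f_i) \ge 0$, so $e_p(f_i) = 0$ and, moreover, the displayed inequality must be an equality, $\sum_{p\to q} v_q(f_i) = \sum_{p\to q} v_q$. Because each summand satisfies $v_q(f_i) \ge v_q$, equality of the sums forces $v_q(f_i) = v_q$ for every $q$ with $p \to q$, which is the second assertion.

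The whole argument is a squeeze, so I do not expect a genuine obstacle. The only points requiring a little care are that the identity \eqref{eq:proximity} is being invoked for the honest multiplicities and values of the curve $\xi_i$ (so that $e_p(f_i) \ge 0$ is legitimate, regardless of whether $\xi_i$ goes sharply through any cluster), and that the index set of $\sum_{p\to q}$ coincides with the one appearing in the definition of $h_p$. One may also observe that the case $p = O$ is vacuous: then $h_O = 0$ while $v_O(f_i) \ge 1$ since $f_i \in \m$, so the hypothesis is never satisfied there.
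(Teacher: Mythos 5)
Your proof is correct and follows essentially the same line of reasoning as the paper's: you use the proximity relation $v_p(f_i) = e_p(f_i) + \sum_{p\to q} v_q(f_i)$, the pointwise minimality $v_q \le v_q(f_i)$, and the non-negativity $e_p(f_i) \ge 0$ to squeeze to equality. The paper carries out exactly this squeeze but splits explicitly into the free case (one proximate point) and the satellite case (two proximate points), handling the final equality of sums by contradiction; your uniform sum notation packages both cases at once, which is a purely cosmetic difference.
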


\begin{proof}
If $p$ is a free point, we take the unique point $q$ such that $p \rightarrow q$. Notice that
$$v_{p}(f_i) = v_{p} = h_{p} = v_{q} = \min_j \{ v_{q}(f_j)\},$$
so we have $v_{p}(f_i) \leq v_{q}(f_i)$. It follows from Equation \ref{eq:mult_val}  that
$v_{p}(f_i) = v_{q}(f_i)$ and  $e_{p}(f_i) = 0$, hence, $v_{q} =
v_{q}(f_i)$.

If $p$ is satellite, we take the points $q$ and $q'$ such that $p \rightarrow q$, $p \rightarrow q'$.
We have $$v_{p}(f_i) = v_{p} = h_{p} = v_{q} +
v_{q'},$$ thus $v_{p}(f_i) \leq v_{q}(f_i) +
v_{q'}(f_i)$. Using Equation \ref{eq:mult_val}, we obtain
$v_{p}(f_i) = v_{q}(f_i) + v_{q'}(f_i)$ and $e_{p}(f_i)=0$.
Finally, if $v_{q} < v_{q}(f_i)$ or $v_{q'} < v_{q'}(f_i)$,
then $h_{p} = v_{q} + v_{q'} < v_{p}(f_i)$, so we get a contradiction.
\end{proof}

\begin{proposition} \label{prop:proposition3}
Under the assumptions of Definition \ref{cluster2}, if $p \in K$, then any point $q$ preceding $p$ also belongs to $K$.
\end{proposition}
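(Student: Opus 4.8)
The plan is to reduce at once to the case in which $q$ is the point directly preceding $p$, that is, $p$ lies in the first neighborhood of $q$: the points preceding $p$ form a chain, so once we know that the direct predecessor of a point of $K$ again lies in $K$, iterating along this chain yields the statement for every preceding point. For this reduced statement I would argue by contraposition, assuming $q\notin K$ and deducing $p\notin K$.

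Here is how that argument would run. Since $h_q\le v_q$ always holds (Lemma \ref{lemma:lemma1}), the hypothesis $q\notin K$ forces $h_q=v_q$. Choose a generator $f_i$ realizing the minimum $v_q=v_q(f_i)$; then $h_q=v_q=v_q(f_i)$, so Lemma \ref{lemma:lemma2} applies at $q$ and gives $e_q(f_i)=0$ together with $v_{q''}=v_{q''}(f_i)$ for every point $q''$ with $q\to q''$. The decisive observation is that $e_q(f_i)=0$ means the curve $\xi_i\colon f_i=0$ does not pass through $q$, hence its strict transform is disjoint from the exceptional divisor $E_q$, so $e_p(f_i)=0$ for every point in the first neighborhood of $q$---in particular for $p$. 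Next I would use the standard description of proximity within a first neighborhood: a free point is proximate only to its direct predecessor, while a satellite point $p$ is proximate to its direct predecessor $q$ and to the unique point $q'$ with $q\to q'$. Thus every $q''$ with $p\to q''$ is either $q$ or satisfies $q\to q''$, and in both cases $v_{q''}=v_{q''}(f_i)$. Applying Equation \ref{eq:mult_val} to $\xi_i$ at $p$ then gives
\[
v_p(f_i)=e_p(f_i)+\sum_{p\to q''}v_{q''}(f_i)=\sum_{p\to q''}v_{q''}=h_p,
\]
so $v_p=\min_j\{v_p(f_j)\}\le v_p(f_i)=h_p$, which together with $h_p\le v_p$ (Lemma \ref{lemma:lemma1}) forces $h_p=v_p$, that is, $p\notin K$.

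The one step that is not a purely formal consequence of Definition \ref{cluster2} and Lemmas \ref{lemma:lemma1}--\ref{lemma:lemma2} is the implication $e_q(f_i)=0\Rightarrow e_p(f_i)=0$: I am using that a curve germ whose strict transform misses an infinitely near point $q$ also misses every point of the exceptional divisor $E_q$ obtained by blowing up $q$. This is geometrically transparent---the blow-up of $q$ is an isomorphism near the strict transform when that transform avoids $q$---and it is the only place where I invoke something from the theory of infinitely near points of \cite{Cas00} beyond the recursions already in place; everything else is bookkeeping with those recursions and the two preceding lemmas.
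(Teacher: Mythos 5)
Your proof is correct and follows essentially the same route as the paper's: contraposition on the direct-predecessor step, choosing $f_i$ realizing the minimum at $q$, invoking Lemma \ref{lemma:lemma2} at $q$, and combining Equation \ref{eq:mult_val} with Lemma \ref{lemma:lemma1} to conclude $h_p=v_p$. The one place you go beyond the paper's wording is in justifying explicitly that $e_q(f_i)=0$ forces $e_p(f_i)=0$ for $p$ in the first neighborhood of $q$---the paper silently folds this into the equality $v_q(f_i)+v_{q'}(f_i)=v_p(f_i)$, so you have in fact made a tacit step explicit rather than argued differently.
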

\begin{proof}
We will prove the converse statement: assume that $q \notin K$, i.e. $h_{q}=v_{q}$. We will prove
$h_{p}=v_{p}$ for any $p$ in the first neighborhood of $q$, and it will follow inductively
 $h_{p}=v_{p}$, i.e. $p \notin K$, for any point $p$
infinitely near to $q$.

Assume that $q \notin K$ and let $p$ be a point in the first neighborhood of $q$, in particular $p \rightarrow q$.
Consider a generator $f_i$ such that $v_{q}= \min_j \{ v_{q}(f_j)\}= v_q(f_i)$, hence
$h_{q}=v_{q}=v_{q}(f_i)$. If $p$ is satellite, we take the second point $q'$ such that  $p \rightarrow q'$.
Then, by Lemma \ref{lemma:lemma2}
\begin{equation*}
h_{p}=v_{q}+v_{q'}= v_{q}(f_i)+ v_{q'}(f_i)= v_{p}(f_i),
\end{equation*}
and by Lemma \ref{lemma:lemma1}, $h_{p}=v_{p}(f_i)= v_{p}$.
If $p$ is free, the same reasoning is valid by taking $v_{q'} = v_{q'}(f_i) = 0$.
\end{proof}

We will show next that $\K=(K,v)$ equals the weighted cluster $BP(\fa)=(B,\beta)$ of base points of $\fa$
and we will conclude that $K$ is finite.

\begin{proposition} \label{prop:proposition4}
Let $BP(\fa)=(B,\beta)$ be the weighted cluster of base points of a \mbox{$\m$-primary} ideal
$\fa = (f_1, \dots, f_r) \subseteq  \bC\{x,y\}$ and $\K=(K,v)$ as given in Definition \ref{cluster2}. Let $p \in B$, then $p \in K$ and the equality of virtual values $\beta_{p}= v_{p}$ is satisfied.
\end{proposition}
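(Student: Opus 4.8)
The plan is to show that every point $p$ of $BP(\fa)$ lies in $K$ with the correct virtual value, by induction along the partial order $\leq$ on infinitely near points, exploiting the fact (recalled in Remark \ref{base_point_ideal}) that $BP(\fa)$ is strictly consistent and that a generic element $\xi: f=0$ of $\fa$ goes sharply through $BP(\fa)$, so that $\beta_p = v_p(f)$ for all $p \in B$. The key identity to keep in mind is that for a generic $f \in \fa$ one has $v_p(f) = v_p$ for all $p$ in $B$ by Lemma \ref{lemma:generators}, and the recursion $\beta_p = e_p(f) + \sum_{p\to q}\beta_q$ (Equation \ref{eq:mult_val}) matches the recursion $v_p = e_p + h_p = e_p + \sum_{p\to q} v_q$ defining $\K$. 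So the heart of the matter is to propagate the equalities $\beta_q = v_q$ from the predecessors of $p$ to $p$ itself, and to check that $p$ is actually captured by the defining condition $h_p < v_p$ of $K$.

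First I would set up the induction: the base case $p = O$ is immediate since $\beta_O = e_O(\fa) = e = \min_i e_O(f_i) = v_O$ and $h_O = 0 < v_O$. For the inductive step, let $p \in B$ and suppose $\beta_q = v_q$ for every $q < p$; note all such $q$ lie in $B$ (clusters are closed under predecessors) and, by the inductive hypothesis, in $K$. If $p$ is free, proximate only to $q$, then $h_p = v_q = \beta_q = \sum_{p'\to q}\beta_{p'}$ restricted appropriately; more precisely I would use the proximity equalities for $BP(\fa)$ together with $\beta_p \geq 1$ (strict consistency) to conclude $h_p = \beta_p - e_p(\check\xi_p) \leq \beta_p - 1 < \beta_p$, where $\xi$ is a generic element going sharply through $BP(\fa)$; combined with Lemma \ref{lemma:lemma1} ($h_p \leq v_p$) and the fact that $v_p \leq v_p(f) = \beta_p$ for generic $f$, one pins down $h_p < v_p$ and $v_p = \beta_p$. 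The satellite case is analogous, using that $p$ is proximate to exactly two points $q, q'$ both preceding $p$, so both in $B \cap K$ by induction, and $h_p = v_q + v_{q'} = \beta_q + \beta_{q'}$.

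The step I expect to be the main obstacle is showing the \emph{strict} inequality $h_p < v_p$, i.e. that $p$ genuinely belongs to $K$ and is not "lost" — the inequality $h_p \le v_p$ is already Lemma \ref{lemma:lemma1}, but we need it to be strict precisely at the points of $B$. The clean way is to invoke that a generic germ $\xi$ goes sharply through $BP(\fa)$, so $v_p(\xi) = \beta_p$ for all $p \in B$, together with Equation \ref{eq:virtual_p}: $e_p(\check\xi_p) = v_p(f) - \sum_{p\to q} v_q(f)$. By the inductive hypothesis $v_q(f) = \beta_q = v_q$ for the predecessors $q$ of $p$ (these are in $B$), so $e_p(\check\xi_p) = \beta_p - h_p$. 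Since $p \in B$ and $BP(\fa)$ is strictly consistent, $e_p = e_p(\check\xi_p) \ge 1 > 0$, giving $h_p = \beta_p - e_p(\check\xi_p) < \beta_p$; and since $\beta_p = v_p(f) \ge v_p$ by Lemma \ref{lemma:lemma0} while $v_p \ge h_p + $ (a nonnegative excess, to be checked) $= \beta_p$, we force $v_p = \beta_p$ and $h_p < v_p$, i.e. $p \in K$. Assembling these, the induction closes and the proposition follows. The only delicate bookkeeping is to make sure the predecessors $q$ of $p$ relevant in the sums $h_p$ and in Equation \ref{eq:virtual_p} coincide — which they do, both being exactly the points to which $p$ is proximate — and that "generic element goes sharply through $BP(\fa)$" may legitimately be invoked here, which is guaranteed by the properties listed in Remark \ref{base_point_ideal}.
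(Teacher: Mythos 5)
There is a genuine gap, and you in fact flag it yourself without closing it. Your chain of inequalities gives, after the induction hypothesis $\beta_q = v_q$ for predecessors and strict consistency of $BP(\fa)$: $h_p = \sum_{p\to q}\beta_q < \beta_p$ (from $e_p(\check\xi_p) = \beta_p - h_p > 0$), and $v_p \le \beta_p$ (from $\beta_p = v_p(f)$ for a sharply-going $f$ together with Lemma~\ref{lemma:lemma0}). Combined with Lemma~\ref{lemma:lemma1} this only yields $h_p \le v_p \le \beta_p$ and $h_p < \beta_p$, which does \emph{not} force $v_p = \beta_p$ nor $h_p < v_p$: $v_p$ could a priori sit strictly between $h_p$ and $\beta_p$, or even equal $h_p$. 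The parenthetical ``$v_p \ge h_p + \text{(a nonnegative excess, to be checked)} = \beta_p$'' is precisely the missing inequality $v_p \ge \beta_p$, and it cannot be deduced from what you have established.

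The paper closes exactly this half by citing \cite[7.2.16]{Cas00}: there exists a \emph{system of generators} $\fa = (h_1,\dots,h_s)$ with each $\zeta_i : h_i = 0$ going sharply through $BP(\fa)$, so $v_p(h_i)=\beta_p$ for all $i$. Writing the original $f_j$ in terms of the $h_i$ and applying Lemma~\ref{lemma:lemma0} then gives $v_p(f_j) \ge \min_i v_p(h_i) = \beta_p$, hence $v_p = \min_j v_p(f_j) \ge \beta_p$. A single sharply-going element, which is all you use, only gives the opposite inequality. Note also that the equality $v_p=\beta_p$ in the paper is proved directly for an arbitrary $p\in B$, with no induction; the induction-like phrase ``the same equality holds for all the points preceding $p$'' is only invoked afterward to compute $h_p$ and conclude $v_p - h_p = b_p > 0$ via strict consistency. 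Your identification of the ``main obstacle'' as the strictness $h_p<v_p$ is therefore misplaced: once $v_p=\beta_p$ is known for $p$ and its predecessors, the strictness follows immediately from strict consistency, exactly as you argue; the real obstacle is the inequality $v_p \ge \beta_p$. Finally, a small slip: Equation~\ref{eq:virtual_p} reads $e_p(\check\xi_p) = v_p(f) - \sum_{p\to q} v_q$ with the \emph{cluster's} virtual values in the sum, not $v_q(f)$; your subsequent substitution happens to correct this since $f$ goes sharply through $BP(\fa)$, but the statement as quoted is wrong.
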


\begin{proof}
Let $f=g_1f_1+\cdots + g_r f_r$ be the equation of a germ $\xi : f=0$ going sharply through $BP(\fa)$.
From Lemma \ref{lemma:lemma0} we obtain
\begin{equation*}
\beta_p = \ v_{p}(f) \geq \min_i \{v_{p}(f_i)\} = v_p.
\end{equation*}
Now, by \cite[7.2.16]{Cas00}, we may find a system of generators $\fa=(h_1,
\dots, h_s)$ such that $\zeta_i : h_i = 0$ goes sharply through
$BP(\fa)$. Then,
\begin{equation*}
v_p = \min_i \{v_{p}(f_i)\} \geq \min_i \{v_{p}(h_i)\}  = \beta_p,
\end{equation*}
after applying Lemma \ref{lemma:lemma0}, once again, to the elements $f_i$ expressed in terms of $h_1,
\dots, h_s$. Therefore, the equality $v_p = \beta_p$ follows.

Since the same equality holds for all the points preceding $p$, we
infer
\begin{equation*}
v_p - h_p = \beta_p - \sum_{p \rightarrow q} \beta_q = b_p > 0,
\end{equation*}
i.e. $v_p > h_p$, since $b_p$ is the virtual multiplicity at $p$
of the strictly consistent weighted cluster $BP(\fa)$.
\end{proof}

\begin{theorem}\label{thm:theorem5}
The weighted clusters $BP(\fa)=(B,\beta)$ and $\mathcal{K}=(K,v)$ are equal.
In particular, $K$ is finite.
\end{theorem}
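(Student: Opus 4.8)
The plan is to obtain both inclusions $B \subseteq K$ and $K \subseteq B$ together with the equality of virtual values, and then deduce finiteness of $K$ from finiteness of $B$. One direction is already done: Proposition~\ref{prop:proposition4} shows that $p \in B$ implies $p \in K$ and $\beta_p = v_p$. So the remaining task is the reverse inclusion $K \subseteq B$, after which $K = B$ is immediate, and since $B$ is a genuine (finite) cluster by its very definition in Section~\ref{cluster_log}, the finiteness of $K$ follows.

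For $K \subseteq B$, I would argue by contraposition: suppose $p \notin B$ and show $p \notin K$, i.e. $h_p = v_p$. First I would handle the case where $p$ is not even among the infinitely near points that get blown up in the minimal log-resolution — there the virtual multiplicity of $BP(\fa)$ vanishes, and one can use Proposition~\ref{prop:proposition3} (contrapositive form) which already propagates the condition $h_q = v_q$ forward along proximity: it suffices to locate, below $p$, a point $q \notin K$ on the relevant chain, which is guaranteed because the cluster $B$ is finite so eventually all sufficiently far points lie outside $B$, and by Proposition~\ref{prop:proposition4} a point in $B$ lies in $K$. The cleaner route, however, is to invoke \cite[7.2.16]{Cas00} once more: choose generators $h_1,\dots,h_s$ of $\fa$ defining germs $\zeta_i : h_i = 0$ going sharply through $BP(\fa)$. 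Then for every point $p$, $v_p = \min_i \{v_p(h_i)\}$ by Lemma~\ref{lemma:generators}, and since each $\zeta_i$ goes sharply through $BP(\fa)$, the points lying on $\zeta_i$ outside $B$ are simple free points off any satellite chain; one checks the proximity equality $v_p(h_i) = \sum_{p \to q} v_q(h_i)$ holds at such $p$ for the branch realizing the minimum, which forces $v_p = \sum_{p \to q} v_q = h_p$, hence $p \notin K$.

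The step I expect to be the main obstacle is making the last argument uniform across all points $p \notin B$: the minimizing generator $h_i$ may differ from point to point, and one needs that whichever $h_i$ realizes $v_p = v_p(h_i)$ also realizes $v_q = v_q(h_i)$ for the point(s) $q$ with $p \to q$, so that the proximity equality for $\zeta_i$ actually transfers to the $v$'s. This is exactly the content one extracts by combining Lemma~\ref{lemma:lemma2} with the sharpness of $\zeta_i$; the bookkeeping is where care is needed. Once that is in place, the theorem reads: by Proposition~\ref{prop:proposition4}, $B \subseteq K$ with $\beta_p = v_p$ on $B$; by the contrapositive argument just sketched, $K \subseteq B$; hence $K = B$ and $v = \beta$, and in particular $K$ is finite since $B$ is.

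Alternatively — and this is perhaps the shortest write-up — one can simply note that $K \subseteq B$ is forced for formal reasons: if $p \in K$ but $p \notin B$, then applying Proposition~\ref{prop:proposition3} to the largest point $q \leq p$ that still lies in $K$ (which exists and lies in $B$ by Proposition~\ref{prop:proposition4}, once we know $K$ meets every chain it reaches in an initial segment) yields a contradiction with maximality unless $p = q$; but then $p \in B$. I would pick whichever of these two presentations the referee finds cleanest, but I lean toward the explicit sharp-generators argument since it keeps everything inside the valuation-theoretic framework already set up in Lemmas~\ref{lemma:lemma0}--\ref{lemma:lemma2}.
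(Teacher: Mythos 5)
Your outline correctly identifies $B\subseteq K$ as Proposition~\ref{prop:proposition4} and correctly isolates $K\subseteq B$ as the remaining task; the overall architecture (reduce to the reverse inclusion, then deduce finiteness of $K$ from that of $B$) is the same as the paper's. However, all three routes you sketch for $K\subseteq B$ have problems, and in each case the missing ingredient is the same: you never invoke, nor re-derive, the precise criterion from \cite[7.2.6]{Cas00}, which states that (once the preceding points are known to be in $B$) one has $p\in B$ if and only if $\min_{f\in\fa} e_p(\check\xi_p)>0$. The paper's proof proceeds by induction on the order of neighborhood: given $p\in K$, Proposition~\ref{prop:proposition3} plus the induction hypothesis place all preceding points in $B$, and then this criterion, combined with Equation~\eqref{eq:virtual_p} and Lemma~\ref{lemma:generators}, literally rewrites $p\in B$ as $v_p>\sum_{p\to s}\beta_s=h_p$, which is the definition of $p\in K$. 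That equivalence is what closes the argument; it cannot be skipped.

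Concretely, here is where your versions break down. In the sharp-generators route you assert, in a single sentence, both that the points on $\zeta_i$ outside $B$ are \emph{simple free points} (i.e.\ $e_p(h_i)=1$) and that the ``proximity equality'' $v_p(h_i)=\sum_{p\to q}v_q(h_i)$ (i.e.\ $e_p(h_i)=0$) holds there for the branch realizing the minimum; these two statements contradict each other. What you actually need is that for $p\notin B$ (with preceding points in $B$) some sharp generator $\zeta_i$ \emph{misses} $p$, so that $e_p(h_i)=0$ and hence $v_p\le v_p(h_i)=\sum_{p\to q}\beta_q=h_p$, which with Lemma~\ref{lemma:lemma1} forces $v_p=h_p$. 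But that a generator exists that misses $p$ is exactly what \cite[7.2.6]{Cas00} (equivalently, the definition of $BP(\fa)$ via the multiplicities $e_p(\fa)$) guarantees; being a sharp generator alone does not prevent passing through $p$ as a simple free point. Your ``clean'' contrapositive using finiteness of $B$ is circular: you want a $q<p$ with $q\notin K$, but finiteness of $B$ tells you nothing about $K$ until the inclusion $K\subseteq B$ is established, which is what you are trying to prove. And your final ``formal reasons'' alternative misreads Proposition~\ref{prop:proposition4}: it gives $B\subseteq K$, not $K\subseteq B$, so the ``largest point $q\le p$ in $K$'' (which is $p$ itself, since $K$ is a cluster by Proposition~\ref{prop:proposition3}) cannot be concluded to lie in $B$. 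The fix is to abandon all three detours and do what the paper does: induct on the order of neighborhood and use \cite[7.2.6]{Cas00} together with Equation~\eqref{eq:virtual_p}.
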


\begin{proof}
From Proposition \ref{prop:proposition4} we already have $B\subseteq K$.
We will prove the other inclusion using induction on the order of
neighborhood which a point $p\in K$ belongs to.

For $p = O$, it is clear that $p$
belongs to both $K$ and $B$. Now, assume that the assertion is true for all the points
preceding $p$, which are in $K$ by  Proposition \ref{prop:proposition3}.
Let $q \in B$ be the antecessor of $p$. By \cite[7.2.6]{Cas00}, $p
\in B$ if and only if $0 < \min _{f \in \fa}
\{e_{p}(\check{\xi} _{p})\}$, where $e_{p} (
\check{\xi}_{p})$ is the virtual multiplicity of the germ
$\xi: f=0$ at $p$ relative to the weighted cluster $BP(\fa)
_{\leq p}$. This is equivalent, by Equation \ref{eq:virtual_p}, to
\begin{equation*}
\min_{f \in \fa} \{v_{p}(f)\} > \sum_{p \rightarrow s} \beta_{s}.
\end{equation*}

By Lemma \ref{lemma:generators}, $v_p = \min_{f \in
\fa}\{v_p(f)\}.$ Thus, applying Proposition \ref{prop:proposition4} to
the points preceding $p$, we have that $p$ belongs to $B$ if and only if
\begin{equation*}
v_p > \sum_{p \rightarrow s} \beta_s =  \sum_{p \rightarrow s} v_s =
h_p.
\end{equation*}
\end{proof}

\begin{remark}
Theorem \ref{thm:theorem5} is a generalization for $\fM$-primary ideals of \cite[2.5]{AC04}
which describes the base points of a pencil $\lambda_1 f_1 + \lambda_2 f_2$ of curves $f_1,f_2 \in \bC\{x,y\}$,
$\lambda_1, \lambda_2 \in \bC$.
\end{remark}

\begin{corollary} \label{corollary:cor-thm-5}
Let $BP(\fa)=(B,\beta)$ be the weighted cluster of base points of a $\fM$-primary ideal $\fa = (f_1, \dots, f_r) \subseteq  \bC\{x,y\}$.
Any cluster of infinitely near points $K'$ weighted by the values
$$v(p) = v_p = \min_i\{v_p(f_i)\}, \hskip 3mm \forall p \in K',$$ or,
alternatively, weighted by the multiplicities  $$e(p) = e_p = v_p
- \sum_{p \rightarrow q} v_q, \hskip 3mm \forall p \in K',$$ satisfying $e_p
\neq 0$ for any $p \in K'$ is a subcluster of $B$.
\end{corollary}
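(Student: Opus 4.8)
The plan is to deduce the statement directly from Theorem~\ref{thm:theorem5} together with Lemma~\ref{lemma:lemma1}. Recall that Theorem~\ref{thm:theorem5} identifies the cluster of base points $B$ with the cluster $K=\{p\in\cN_O : h_p < v_p\}$ of Definition~\ref{cluster2}, and that Proposition~\ref{prop:proposition4} gives $\beta_p = v_p = \min_i\{v_p(f_i)\}$ for every $p\in B$. Hence it is enough to check that each point of $K'$ lies in $K$ and carries there the value $v_p$.

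First I would fix $p\in K'$. Since $K'$ is a cluster it is closed under predecessors, so every point $q$ with $p\rightarrow q$ (all of which precede $p$) already belongs to $K'$. Consequently the quantity $h_p=\sum_{p\rightarrow q} v_q$ appearing in the hypothesis $e_p=v_p-h_p\neq 0$ is exactly the $h_p$ of Definition~\ref{cluster2}, computed from the intrinsic values $v_q=\min_i\{v_q(f_i)\}$.

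Next, Lemma~\ref{lemma:lemma1} gives $h_p\leq v_p$ for every infinitely near point, so $e_p=v_p-h_p\geq 0$; combined with $e_p\neq 0$ this forces $e_p>0$, i.e. $h_p<v_p$, which is precisely the defining condition for $p\in K$. By Theorem~\ref{thm:theorem5}, $p\in B$, and Proposition~\ref{prop:proposition4} yields $\beta_p=v_p$. Since this holds for every $p\in K'$ and $K'$ is closed under predecessors, $K'$ is a weighted subcluster of $B$: the values coincide with the restriction of $\beta$, and the virtual multiplicities coincide as well, because on $K'$ they are recovered from the values $v_p=\beta_p$ via the same proximity relations used inside $B$ (all the relevant predecessors of any $p\in K'$ lying in $K'$).

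I do not expect a genuine obstacle: the corollary is essentially a repackaging of the inequality $h_p\leq v_p$ (Lemma~\ref{lemma:lemma1}) and of the identification $B=K$ (Theorem~\ref{thm:theorem5}), with the hypothesis ``$e_p\neq 0$'' used only to upgrade the always-true inequality $e_p\geq 0$ to the strict inequality $e_p>0$. The one point deserving a moment's care is the meaning of \emph{subcluster}: one must confirm that restricting $(B,\beta)$ to the set $K'$ reproduces both the prescribed values and the prescribed multiplicities, which is immediate since every $q$ with $p\rightarrow q$ precedes $p$ and hence lies in $K'$.
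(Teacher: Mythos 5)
Your proposal is correct and follows essentially the same route as the paper's proof: identify $K=\{p: e_p>0\}$, use Lemma~\ref{lemma:lemma1} to upgrade $e_p\neq 0$ to $e_p>0$ so that $K'\subseteq K$, and invoke Theorem~\ref{thm:theorem5} (with Proposition~\ref{prop:proposition4}) to conclude $K=B$ with matching weights. The paper states this more tersely, leaving the $e_p\geq 0$ step implicit, but the argument is the same.
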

\begin{proof}
Since, by definition, $K = \{ p \in \mathcal{N}_O\ |\ e_p > 0 \}$,
clearly $K' \subseteq K$. Then, the result follows using Theorem \ref{thm:theorem5}.
\end{proof}

\section{An algorithm to compute the base points of an ideal} \label{sec4}
In this section we will provide an algorithm that allows to
compute the weighted cluster of base points of any ideal $\fa=(a_1,\dots,a_r) \subseteq \bC\{x,y\}$.
First we recall that we have a decomposition
$\fa= (g) \cdot \fa'$, where $g\in \bC\{x,y\}$  is the greatest common
divisor of the generators of $\fa$ and  $\fa' = (f_1, \dots, f_r)$ is $\fM$-primary.
Moreover, the weighted cluster of base points of $\fa$ is described in terms of the base points of $\fa'$
and the cluster of singular points of the reduced germ $\eta_{\rm red}$ of  $\eta: g=0$ (see Remark \ref{base_point_ideal}).

\vskip 2mm

The cluster $\mathcal{S}(\eta_{\rm red})$ is easy to describe using the Newton-Puiseux algorithm (see \cite{Cas00})
so the bulk of the process is in the computation of $BP(\fa')$. Before proceeding to describe the algorithm we need
to introduce several technical results that will allow us to compute  the weighted cluster
$BP(\fa')$ in terms of the weighted cluster of singular points of the germs defined by the set of generators
 $\xi_i: f_i = 0, i = 1, \dots, r$.

\subsection{Adding free and satellite points} \label{sec41}
For the sake of simplicity, we will assume throughout this subsection that our ideal  $\fa = (f_1, \dots, f_r) \subseteq  \bC\{x,y\}$ is $\fM$-primary.
In order to compute the weighted cluster $BP(\fa)=(B,\beta)$ we will start with a weighted cluster associated to
the product of the generators  $\xi: f_1 \cdots f_r = 0$ which gives a first approximation.
Then, using the results of this section, we will add the necessary free and satellite points
to this weighted cluster to obtain all the base points.

\vskip 2mm

\begin{ass} \label{assumption}
Assume $\mathcal{K}' = (K', v)$ is a weighted cluster with a system of
values $v_p = \min_i \{v_p(f_i)\}$ for any $p \in K'$ and satisfying that any point $p \in B$ singular for $\xi_{\rm red}$ is already in $K'$.
\end{ass}


\vskip 2mm

The following set of technical results will allow us to decide which
points we need to add to $K'$  in order to obtain all the base points.
The first result states that all the free points in $BP(\fa)$ lie on
the generators.


\vskip 2mm

\begin{lemma} \label{lemma:free-points-generators}
Let $q$ be a free point that does not lie on any curve $\xi_i: f_i=0, \hskip 2mm i=1,\dots,r$ then, $q
\not\in BP(\fa)$.
\end{lemma}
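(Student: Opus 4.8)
The plan is to argue by contradiction, exploiting the characterization of base points in terms of multiplicities of virtual transforms. Suppose $q$ is a free point with $q\in BP(\fa)$ and $q$ does not lie on any $\xi_i$, i.e. $e_q(f_i)=0$ for all $i=1,\dots,r$. Since $q$ is free, it is proximate to a single point $p$ in its antecessor, and $p\in BP(\fa)$ as well (base point clusters are clusters, so they are closed under predecessors; this also follows from Proposition~\ref{prop:proposition3} via Theorem~\ref{thm:theorem5}). By Theorem~\ref{thm:theorem5} we may work with $\mathcal K=(K,v)$ instead of $BP(\fa)=(B,\beta)$, so $q\in B$ translates into $v_q>h_q$, where $h_q=\sum_{q\to s}v_s$.

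First I would record what $e_q(f_i)=0$ gives via the proximity equality~\eqref{eq:proximity}: for each generator $f_i$, $v_q(f_i)=\sum_{q\to s}v_s(f_i)$. Since $q$ is free, this sum has a single term, namely $v_q(f_i)=v_{p}(f_i)$ where $p$ is the unique point $q$ is proximate to. Hence $v_q=\min_i v_q(f_i)=\min_i v_{p}(f_i)=v_{p}$. On the other hand $h_q=\sum_{q\to s}v_s$; as $q$ is free this is just $v_{p}=v_q$. Therefore $h_q=v_q$, contradicting $q\in K$. This handles the free, non-satellite-forcing situation directly. The one subtlety is the case where $q$, although free, is proximate to a point $p$ that is itself the origin or sits in a configuration where ``the unique point $q$ is proximate to'' must be correctly identified; but by the very definition of free point there is exactly one such $p$, and $q$ being in the first neighborhood of its antecessor forces $v_q(h)\ge v_{p}(h)$ for every germ $h$, with equality precisely when $e_q(h)=0$.

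The main obstacle—really the only place requiring care—is making sure the equality $e_q(f_i)=0$ for all generators $f_i$ transfers to $e_q(f)=0$ for a generic $f\in\fa$ going sharply through $BP(\fa)$, so that the multiplicity-based criterion ``$p\in B \iff 0<\min_{f\in\fa}e_p(\check\xi_p)$'' of \cite[7.2.6]{Cas00} can be invoked; but in fact the cleaner route, which I would follow, bypasses generic elements entirely and uses only Lemma~\ref{lemma:generators} together with Definition~\ref{cluster2}: the inequality $v_q\le v_{p}$ from freeness of $q$ combined with $h_q=v_{p}$ (again from freeness) forces $v_q\le h_q$, and with Lemma~\ref{lemma:lemma1} ($h_q\le v_q$ always) we conclude $v_q=h_q$, i.e. $q\notin K=B$. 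The contrapositive is exactly the statement of the lemma.
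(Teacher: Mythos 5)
Your proposal is correct and follows essentially the same route as the paper: use that $e_q(f_i)=0$ for every generator to get $v_q(f_i)=v_p(f_i)$, deduce $v_q=\min_i v_q(f_i)=\min_i v_p(f_i)=v_p$, note that freeness gives $h_q=v_p$, and conclude $e_q=v_q-h_q=0$ so $q\notin K=B$ by Theorem~\ref{thm:theorem5}. One small slip in your final ``cleaner route'' paragraph: the inequality $v_q\le v_p$ does \emph{not} follow from freeness of $q$ (freeness alone gives only $h_q=v_p$, and in general $v_q(h)\ge v_p(h)$, hence $v_q\ge v_p$); that direction comes precisely from the hypothesis that $q$ lies on no $\xi_i$, which forces $v_q(f_i)=v_p(f_i)$ for all $i$. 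Your first paragraph already states this correctly, so the proof stands, but the attribution in the closing sentence should be fixed.
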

\begin{proof}
Let $q \rightarrow p$. If $q \not\in \xi_i: f_i=0, $ for all $i$, then
$v_{q}(f_i) = v_p(f_i)$ for all $i = 1, \dots, r$ and $v_{q} =
\min_i\{v_{q}(f_i) \} = \min_i\{v_p(f_i)\} = v_p$, hence $e_{q}
= v_{q} - v_{p} = 0$ and $q \not\in BP(\fa)$.
\end{proof}

The next result characterizes the free points in $BP(\fa)$ that are not singular for the reduced germ $\xi_{\rm red}$ associated to the generators.

\begin{proposition} \label{prop:missing-free-points}
Let $\mathcal{K}'$ be a weighted cluster as in Assumption \ref{assumption}. Let $q \not\in K'$ be a free
point proximate to $p \in K'$. Then, $q$ is in $BP(\fa)$ if and only if
any generator $f_i$ with $v_p(f_i)=v_p$ satisfies $e_q(f_i)>0$.
\end{proposition}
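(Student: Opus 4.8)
The key object is the value $v_q$ at the free point $q$, which by Lemma \ref{lemma:generators} equals $\min_{f\in\fa}\{v_q(f)\}$, hence equals $\min_i\{v_q(f_i)\}$; and whether $q\in BP(\fa)$ is controlled, via Theorem \ref{thm:theorem5} (equivalently Corollary \ref{corollary:cor-thm-5}), by the strict inequality $v_q > h_q = v_p$, where $p$ is the unique point $q$ is proximate to. So the whole proposition reduces to understanding when $\min_i\{v_q(f_i)\} > v_p$. The plan is to split the generators into those that attain the minimum at $p$, i.e. with $v_p(f_i)=v_p$, and those that do not, and to analyze each family separately.

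\textbf{Step 1: the generators not attaining the minimum at $p$ are harmless.} If $v_p(f_i) > v_p$, then since $q\to p$ is free we have $v_q(f_i) \geq v_p(f_i) \geq v_p + 1 > v_p$. So these generators never obstruct the inequality $\min_i v_q(f_i) > v_p$; they can be ignored.

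\textbf{Step 2: reduce to the generators attaining the minimum at $p$.} For a generator $f_i$ with $v_p(f_i)=v_p$, the proximity/multiplicity relation (Equation \ref{eq:mult_val}) applied to the curve $\xi_i$ at $q$ gives $v_q(f_i) = e_q(f_i) + \sum_{q\to s, s\neq q}v_s(f_i)$; but $q$ is free, so the only point $q$ is proximate to is $p$, giving $v_q(f_i) = e_q(f_i) + v_p(f_i) = e_q(f_i) + v_p$. Hence for these generators $v_q(f_i) > v_p$ if and only if $e_q(f_i) > 0$. Combining with Step 1, $v_q = \min_i\{v_q(f_i)\} > v_p = h_q$ if and only if $e_q(f_i) > 0$ for \emph{every} generator $f_i$ with $v_p(f_i)=v_p$ (if one such generator had $e_q(f_i)=0$, it would contribute the value $v_p$ and force $v_q = v_p$). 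By Theorem \ref{thm:theorem5} this last condition is exactly $q\in BP(\fa)$, which is the claim.

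\textbf{What needs care.} The one point that needs justification rather than being purely formal is that the value $v_p$ used in the statement — computed from the \emph{given} cluster $\mathcal{K}'$, i.e. $v_p = \min_i\{v_p(f_i)\}$ — is the same as the virtual value $\beta_p$ of $p$ in $BP(\fa)$, so that $h_q$ (which involves $\beta_p$, since $p$ is the unique predecessor to which $q$ is proximate) really equals $v_p$. This is where Assumption \ref{assumption} and Proposition \ref{prop:proposition4} enter: $p\in K'\subseteq B$ by hypothesis, and Proposition \ref{prop:proposition4} gives $\beta_p = v_p$. Once this identification is in place, $h_q = \sum_{q\to s}\beta_s = \beta_p = v_p$, and Steps 1–2 conclude. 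I expect Step 2, and in particular the bookkeeping that a single non-excess generator among those attaining the minimum already kills the candidacy of $q$, to be the substantive point; everything else is a direct translation through Equation \ref{eq:mult_val}, Equation \ref{eq:virtual_p}, and Theorem \ref{thm:theorem5}.
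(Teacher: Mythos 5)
Your proof is correct and follows essentially the same route as the paper's: both apply Theorem \ref{thm:theorem5} to reduce membership of $q$ in $BP(\fa)$ to the inequality $v_q>h_q=v_p$, use the identity $v_q(f_j)=e_q(f_j)+v_p(f_j)$ for the free point $q$, and split the generators to conclude (the paper organizes the split by $e_q(f_j)>0$ via a set $\Lambda$, you organize it by whether $v_p(f_i)=v_p$, but these are two phrasings of the identical comparison). One small remark: the detour through $\beta_p$ in your final paragraph is unnecessary, since $h_q$ in Definition \ref{cluster2} is defined directly as $\sum_{q\to s}v_s$ with $v_s=\min_i v_s(f_i)$, so $h_q=v_p$ holds by definition without invoking Proposition \ref{prop:proposition4}; Assumption \ref{assumption} already fixes the weights of $\mathcal{K}'$ to coincide with these $v$-values.
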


\begin{proof}
We shall apply Theorem \ref{thm:theorem5} to characterize whether $q$
belongs to $BP(\fa)$. By definition, $v_{q} = \min_j \{v_{q}(f_j)\}$
and $v_{q}(f_j) = e_{q}(f_j) + v_p(f_j),$ for $j = 1,\dots, r$.

\vskip 2mm

Set $\Lambda:=\{ j \hskip 2mm | \hskip 2mm e_q(f_j) >0 \}$. Comparing
$$v_q= \min_{j\in \Lambda, k \not\in \Lambda} \{v_p(f_k), v_p(f_j) +e_q(f_j)\} \hskip 5mm {\rm and} \hskip 5mm
v_p= \min_i \{v_p(f_i)\},$$ we infer $e_q=v_q-v_p >0$ if and only if $v_p(f_k)> v_p$, $\forall k \not\in \Lambda$
which is equivalent to $\{ i \hskip 2mm | \hskip 2mm v_p(f_i) =v_p \} \subseteq \Lambda$.
\end{proof}

\begin{remark}
Under the hypothesis of Proposition \ref{prop:missing-free-points} we observe that there might be two
generators, say $f_i,f_j$, such that $e_q(f_i)>0$, $e_q(f_j)>0$ although $q$ is not singular for the reduced
germ of $f_1\cdots f_r=0$. This may happen when $f_i$ and $f_j$ have a common factor which is not a common factor
of the rest of generators. This is a subtle difference with respect to the case of pencils treated in \cite{AC04}.
\end{remark}

Our next result deals with the
satellite base points not already in $K'$. Notice that these missing
satellite points will not lie on any generator, otherwise they would
belong to the singular points of $\xi_{\rm red}$.

\begin{proposition} \label{prop:missing-sat-points}
Let $\mathcal{K}'$ be a weighted cluster as in Assumption \ref{assumption}. Let $q \not\in K'$ be a satellite
point proximate to $p, p' \in K'$. Then, $q$ is in $BP(\fa)$ if and
only if for each generator $f_i$ either $v_p(f_i) > v_p$ or
$v_{p'}(f_i) > v_{p'}$.
\end{proposition}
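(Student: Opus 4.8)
The plan is to read off membership in $BP(\fa)$ directly from Theorem \ref{thm:theorem5}: identifying $BP(\fa)$ with the cluster $\mathcal{K}=(K,v)$ of Definition \ref{cluster2}, we have $q\in BP(\fa)$ precisely when $v_q>h_q$. Since $q$ is satellite and proximate exactly to $p$ and $p'$, by definition $h_q=v_p+v_{p'}$, while $v_q=\min_i\{v_q(f_i)\}$ and, applying the proximity equality \eqref{eq:mult_val} to each germ $\xi_i\colon f_i=0$,
\[
v_q(f_i)=e_q(f_i)+v_p(f_i)+v_{p'}(f_i)\geq v_p(f_i)+v_{p'}(f_i).
\]
So the proposition reduces to the elementary equivalence: $\min_i\{v_q(f_i)\}>v_p+v_{p'}$ if and only if for every generator $f_i$ one has $v_p(f_i)>v_p$ or $v_{p'}(f_i)>v_{p'}$.

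For the implication ``$\Leftarrow$'' I would argue directly, with no information on whether $q$ lies on a generator. If for each $i$ at least one of the two strict inequalities holds, then — virtual values being integers — $v_p(f_i)+v_{p'}(f_i)\geq v_p+v_{p'}+1$ for every $i$, whence $v_q=\min_i v_q(f_i)\geq \min_i\{v_p(f_i)+v_{p'}(f_i)\}\geq v_p+v_{p'}+1>h_q$, and $q\in BP(\fa)$ by Theorem \ref{thm:theorem5}.

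For ``$\Rightarrow$'' I would first establish the key reduction: if $q\in BP(\fa)$, then $q$ lies on no generator, i.e. $e_q(f_i)=0$ for all $i$. Indeed, if $e_q(f_i)>0$ for some $i$ then $q$ lies on $\xi_{\rm red}\colon f_1\cdots f_r=0$, and being satellite it is a singular point of $\xi_{\rm red}$; since $q\in B$, Assumption \ref{assumption} would then force $q\in K'$, contradicting $q\notin K'$. Hence $v_q(f_i)=v_p(f_i)+v_{p'}(f_i)$ for all $i$ by \eqref{eq:mult_val}, so $v_q=\min_i\{v_p(f_i)+v_{p'}(f_i)\}$. If there were a generator $f_{i_0}$ with $v_p(f_{i_0})\leq v_p$ and $v_{p'}(f_{i_0})\leq v_{p'}$, then by minimality of $v_p$ and $v_{p'}$ both would be equalities, giving $v_q\leq v_p(f_{i_0})+v_{p'}(f_{i_0})=v_p+v_{p'}=h_q$ and contradicting $v_q>h_q$ (Theorem \ref{thm:theorem5}). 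Therefore every generator satisfies the asserted alternative.

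The only genuinely delicate step is the reduction ``$q\in BP(\fa)\Rightarrow q$ lies on no generator'': this is where Assumption \ref{assumption} enters, and it is slightly more subtle than the analogous point in the proof of Proposition \ref{prop:missing-free-points}, because for a satellite point the obstruction is routed through the notion of a singular point of $\xi_{\rm red}$ rather than directly through the condition $e_q(f_i)>0$. Everything else is bookkeeping with the valuations $v_p(\cdot)$ and the integrality of virtual values.
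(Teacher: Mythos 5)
Your proof is correct and follows the same route as the paper: both directions are read off from the characterization of Theorem~\ref{thm:theorem5}, namely $q\in BP(\fa)\Leftrightarrow v_q>h_q$, combined with the proximity relation~\eqref{eq:mult_val}. Your forward direction matches the paper's: use Assumption~\ref{assumption} together with $q\in B$ to conclude $e_q(f_i)=0$ for all $i$, then derive a contradiction with $v_q>h_q$. Your converse direction is a small refinement of the paper's argument: the paper again invokes the claim that $q$ lies on no generator so as to write $v_q(f_j)=v_p(f_j)+v_{p'}(f_j)$, whereas you observe, correctly, that the one-sided inequality $v_q(f_j)\geq v_p(f_j)+v_{p'}(f_j)$ (valid unconditionally, since $e_q(f_j)\geq 0$) already suffices; this is tidier, since in that direction $q\in B$ has not yet been established and so Assumption~\ref{assumption} is not readily available. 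One cosmetic point: the appeal to integrality is unnecessary. Since $v_p=\min_i v_p(f_i)$ and $v_{p'}=\min_i v_{p'}(f_i)$, one always has $v_p(f_i)\geq v_p$ and $v_{p'}(f_i)\geq v_{p'}$, so a single strict inequality already forces $v_p(f_i)+v_{p'}(f_i)>v_p+v_{p'}$ without any counting of integers.
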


\begin{proof}
Let us start by proving the converse implication. We know that $q
\not\in f_j$ for any $j = 1, \dots, r$, otherwise $q$ would be in $K'$.
We want to see that $e_{q} = v_{q} - v_p - v_{p'} > 0$. Then, $v_{q}
= \min_j\{v_{q}(f_j)\} = \min_j\{v_p(f_j) + v_{p'}(f_j)\}$ and the last
equality is true because $e_{q}(f_j) = 0$. By hypothesis, $v_p(f_j)
+ v_{p'}(f_j) > v_p + v_{p'}$, for any $j$, hence $v_{q} > v_p + v_{p'}$ as we
wanted.

For the other implication, let us assume the contrary, that is,
there exists a generator $f_i$ such that $v_p(f_i) = v_p$ and
$v_{p'}(f_i) = v_{p'}$. We know that $q \not\in f_i$, otherwise it
would be in $K'$. By definition, $v_{q} = \min_j\{v_{q}(f_j)\} =
\min_j\{v_p(f_j) + v_{p'}(f_j)\} = v_p(f_i) + v_{p'}(f_i) = v_p +
v_{p'}$, implying that $e_{q} = 0$, which is a contradiction with
the fact that $q \in BP(\fa)$.
\end{proof}

\subsection{An algorithm to compute the base points of an ideal} \label{sec42}
In this subsection we go back to our original setup, so let
$\fa=(a_1,\dots,a_r) \subseteq \bC\{x,y\}$ be an ideal that admits
 a decomposition $\fa= (g) \cdot \fa'$, where $g\in \bC\{x,y\}$  is the greatest common
divisor of the generators of $\fa$ and  $\fa' = (f_1, \dots, f_r)$ is $\fM$-primary.
With all the technical results stated above and the relation between $BP(\fa)$, $BP(\fa')$ and the cluster
$\mathcal{S}(\eta_{\rm red})$ of singular points of $\eta: g=0$ (see Remark \ref{base_point_ideal}), we present our algorithm.

\begin{algorithm} { (Base points of an ideal)} \label{A1}

\vskip 2mm

\noindent {\tt Input:}  An ideal  $\fa=(a_1,\dots,a_r) \subseteq \bC\{x,y\}$.

\noindent {\tt Output:} The weighted cluster of base points $BP(\fa)$.

\vskip 2mm

\begin{enumerate}
  \item Find $g=\gcd(a_1,\dots,a_r)$ and set $a_i=gf_i$. Compute $f=gf_1 \cdots f_r$.

  \vskip 2mm

  \item Find the cluster $\overline{K}$ of singular points of the reduced germ $\xi_{\rm red}$,  where $\xi: f=0$,
  and the system of virtual values $\{v_p(f_i)\}_{p\in\overline{K}}$, and $\{v_p(g)\}_{p\in\overline{K}}$.

  \vskip 2mm

  \noindent Compute $v_p = \min_{i}\{v_p(f_i)\}$ for $p \in \overline{K}$ and set the weighted cluster
  $\overline{\mathcal{K}}= (\overline{K},v)$.

  \vskip 2mm

  \item Define $\mathcal{K}' = (K', v)$ from $\overline{\mathcal{K}}$ by adding, if necessary, the missing free points using Proposition \ref{prop:missing-free-points} weighted by the values
  $ v_p = \min_i \{v_p(f_i)\}$ for each new $p \in K' \setminus \overline{K}$.

  \vskip 2mm

  \item Define $\mathcal{K''} = (K'', v)$ from $\mathcal{K}'$ by adding, if necessary, the missing satellite points using Proposition \ref{prop:missing-sat-points}, weighted by the values $ v_p = \min_i \{ v_p(f_i) \}$ for each new $p \in K'' \setminus K'$.

  \vskip 2mm

  \item Compute, recursively on the order of neighborhood $p$ belongs to, the multiplicities
  $e_p = v_p - \sum_{p \rightarrow q} v_q.$

  \vskip 2mm

  \noindent Define $\mathcal{K} = (K, v)$ with $K \subset K''$ containing the points
  $p \in \mathcal{K}$ such that $e_p \neq 0$ and the virtual values $ v_p$.

  \vskip 2mm

  \item Set $BP(\fa')=\mathcal{K}$, the weighted cluster of base points of $\fa'=(f_1,\dots,f_r)$.

   \vskip 2mm

  \item From $\overline{K}$ extract the cluster of singular points $\mathcal{S}(\eta_{\rm red}) =(S,v'')$,
  where $\eta:g=0$.

  \vskip 2mm

  \item Return $BP(\fa)= (B,\beta)$, where $B= K\cup S$ and $\beta_p=v_p + v_p(g)$, $\forall p\in B$.
\end{enumerate}
\end{algorithm}

Our next result proves the correctness of the Algorithm.

\begin{theorem} \label{thm:algorithm}
Algorithm \ref{A1} computes $BP(\fa) = (B, \beta)$, the weighted cluster of base points of the ideal $\fa$.
\end{theorem}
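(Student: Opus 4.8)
The plan is to verify the algorithm step by step, anchoring everything to Theorem~\ref{thm:theorem5} (which characterizes $BP(\fa')$ for an $\fM$-primary ideal and, crucially, asserts its finiteness) and to Remark~\ref{base_point_ideal} (which reduces a general ideal to its $\fM$-primary part). First I would invoke Remark~\ref{base_point_ideal}: since $BP(\fa) = BP((g)) + BP(\fa')$, that is $B = K \cup S$ and $\beta_p = v_p + v_p(g)$ with $(K,v) = BP(\fa')$ and $S$ the underlying cluster of $\mathcal{S}(\eta_{\rm red})$, it suffices to prove (a) that steps (i)--(vi) output $\mathcal{K} = (K,v) = BP(\fa')$, and (b) that steps (vii)--(viii) recover $S$ and assemble the two clusters with the stated values. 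Part (b) is the easy half: every configuration forcing a point to be singular for $\eta_{\rm red}$ also forces it singular for $\xi_{\rm red}$ (the branches of $\eta_{\rm red}$ are among those of $\xi_{\rm red}$), so $S \subseteq \overline{K}$ is read off in step (vii); and the values $v_p(g)$ needed in step (viii) are either recorded in step (ii) or, at the free and satellite base points adjoined later, determined from those of the preceding points by the proximity relations. So the content lies in (a).

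For (a), note first that $\overline{\mathcal{K}} = (\overline{K},v)$ from step (ii) satisfies Assumption~\ref{assumption}: its values are $v_p = \min_i\{v_p(f_i)\}$ by construction, and $\overline{K}$ contains \emph{every} singular point of $\xi_{\rm red}$, a fortiori all of those lying in $BP(\fa')$. I would then prove, by induction on the order of the neighborhood a point belongs to, that after step (iii) the cluster $K'$ contains every \emph{free} point of $BP(\fa')$ (those singular for $\xi_{\rm red}$ are in $\overline{K}$, the rest are adjoined), and that after step (iv) the cluster $K''$ contains every \emph{satellite} point of $BP(\fa')$ as well; since $O \in \overline{K}$, this yields $BP(\fa') \subseteq K''$. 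Throughout, $\overline{K}$ is a cluster and each newly adjoined point is added together with all of its predecessors (for a free point, its unique proximate point and that point's predecessors; for a satellite point, its two proximate points and their predecessors), so $K'$ and $K''$ remain clusters, Assumption~\ref{assumption} stays in force, and the proximity sums appearing later are well defined. Moreover Propositions~\ref{prop:missing-free-points} and~\ref{prop:missing-sat-points} adjoin only genuine base points, and $BP(\fa')$ is finite by Theorem~\ref{thm:theorem5}, so the two loops terminate.

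The delicate point --- the one I expect to be the main obstacle --- is justifying that one may exhaust all missing \emph{free} points before touching any satellite point, i.e. that the proximate point $p$ of a missing free base point $q$ is already present when $q$ is processed. By Proposition~\ref{prop:proposition3}, $p \in BP(\fa')$ and $p$ sits in an earlier neighborhood. The key claim is that $p$ cannot be a satellite base point missing from $\overline{K}$: such a $p$ would fail to be singular for $\xi_{\rm red}$, hence would not lie on $\xi_{\rm red}$, so $e_p(f_i) = 0$ for every $i$; then the strict transforms of the $\xi_i$ miss every point infinitely near $p$, in particular $q$, whence $e_q(f_i) = 0$ for all $i$ and Lemma~\ref{lemma:free-points-generators} would force $q \notin BP(\fa')$, a contradiction. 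Therefore $p$ is either singular for $\xi_{\rm red}$, so $p \in \overline{K} \subseteq K'$, or a free base point of an earlier neighborhood, so $p \in K'$ by the induction hypothesis; in either case Proposition~\ref{prop:missing-free-points} applies and detects $q$. For step (iv) the analogous induction is routine: a missing satellite base point $q$ is proximate to two points $p, p'$, both preceding $q$ and both in $BP(\fa')$ by Proposition~\ref{prop:proposition3}, hence both already in $K''$ (they lie in $\overline{K}$, or were adjoined in step (iii) as free base points, or in step (iv) at an earlier neighborhood), so Proposition~\ref{prop:missing-sat-points} applies.

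Finally, in step (v) the multiplicities $e_p = v_p - \sum_{p\to q} v_q$ are computed correctly for every $p \in K''$: because $K''$ is a cluster, all proximate points $q$ of $p$ and their recorded values $v_q = \min_i\{v_q(f_i)\}$ are available, and the sum is exactly the quantity $h_p$ of Definition~\ref{cluster2}. By Lemma~\ref{lemma:lemma1} one has $e_p \ge 0$, and $e_p > 0$ precisely for the points of the cluster $\mathcal{K}$ of Definition~\ref{cluster2}. Since $BP(\fa') \subseteq K''$ by the previous step and $BP(\fa') = \mathcal{K}$ by Theorem~\ref{thm:theorem5}, discarding from $K''$ the points with $e_p = 0$ leaves exactly $BP(\fa')$, endowed with the correct virtual values; this is the cluster returned in step (vi). Combining it with $\mathcal{S}(\eta_{\rm red})$ as in steps (vii)--(viii) and invoking Remark~\ref{base_point_ideal} gives $BP(\fa) = (B,\beta)$, which completes the proof.
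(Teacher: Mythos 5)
Your proof is correct and follows essentially the same approach as the paper's: reduce to the $\fM$-primary case via Remark~\ref{base_point_ideal}, show that Propositions~\ref{prop:missing-free-points} and~\ref{prop:missing-sat-points} applied iteratively starting from $\overline{K}$ catch all of $BP(\fa')$, and conclude via Corollary~\ref{corollary:cor-thm-5}. The key point you isolate as delicate --- a satellite base point missing from $\overline{K}$ does not lie on $\xi_{\rm red}$, hence neither does any point after it, so a free base point proximate to such a satellite would contradict Lemma~\ref{lemma:free-points-generators} --- is exactly the argument the paper makes (more tersely) to show that one pass of free then satellite additions suffices.
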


\begin{proof}
Since the cluster $\overline{K}$ fulfills the hypothesis of Proposition \ref{prop:missing-free-points} and Proposition \ref{prop:missing-sat-points}, we can use them to add the remaining base points.

After step ${\rm iv)}$ all the base points have been added. Indeed,  if we had to add a
missing base point in the first neighborhood of a point already in
$K'$, it would have to be free as we have added all the missing
satellites in the last step. This free point would have to lie on a
generator, by Lemma \ref{lemma:free-points-generators}, and it would have
to be after one of the new satellite points, otherwise we would have
added it in the fourth step. But that is impossible because the new
satellite points cannot lie on a generator, by Assumption \ref{assumption}, and hence, neither can do any of its successors.

By Corollary \ref{corollary:cor-thm-5}, after removing the points $p$
in $K''$ such that $e_p = 0$, the resulting cluster $K$ is inside $B$ and since no base point is missing it must be equal to $B$.
\end{proof}

It is a classical result that the equisingularity class, and equivalently the topological class \cite{zariski-equisingularity}, \cite{brauer}, of any element $f = f_1^{\alpha_1} \cdots f_r^{\alpha_r} \in \mathcal{O}_{X, O}$ with $f_1, \dots, f_r$ irreducible, is determined by the equisingularity class of each component $f_i^{\alpha_i}$ together with the intersection multiplicities\footnote{Noether's intersection formula states that $[f\cdot g]_{O}=\sum_{p\in K} e_p(f)e_p(g)$.}  $[f_i\cdot f_j]_{O}$, for all $i\neq j$. This can be described by means of the proximity matrix associated to the log-resolution of $(f)$ and its vector of multiplicities. As a corollary of Theorem \ref{thm:algorithm} we obtain the following generalization.

\begin{corollary}
Given an ideal $\mathfrak{a} = (a_1, \dots, a_r) \subseteq \mathcal{O}_{X, O}$, the equisingular
class of a generic element of $\mathfrak{a}$ is determined by the equisingularity class of each generator $a_i$, and the intersection multiplicities of every pair of branches from different generators.
\end{corollary}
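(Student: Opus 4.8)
The plan is to deduce the corollary directly from Theorem~\ref{thm:algorithm} together with the classical fact (just recalled) that the equisingularity class of a curve $f=f_1^{\alpha_1}\cdots f_r^{\alpha_r}$ is encoded by the proximity matrix of the minimal log-resolution of $(f)$ and its vector of multiplicities. The point is that a generic element $h\in\fa$ defines a germ $\zeta:h=0$ that goes sharply through $BP(\fa)$ (Remark~\ref{base_point_ideal}), so its equisingularity class is precisely the one read off from $BP(\fa)=(B,\beta)$: the underlying cluster $B$ records the infinitely near points to be blown up, hence the proximity matrix, and the virtual values $\beta_p$ (equivalently the virtual multiplicities $e_p$) coincide with the actual multiplicities $e_p(h)$ of $\zeta$. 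Thus it suffices to show that $BP(\fa)$ is determined by the data in the statement.

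Concretely I would argue as follows. First, reduce to the $\fM$-primary case: writing $\fa=(g)\cdot\fa'$ with $\fa'=(f_1,\dots,f_r)$ as in Section~\ref{sec42}, by Remark~\ref{base_point_ideal} we have $BP(\fa)=BP((g))+BP(\fa')$, and $BP((g))=\mathcal{S}(\eta_{\mathrm{red}})$ weighted by the $e_p(g)$ depends only on the equisingularity class of $g=\gcd(a_1,\dots,a_r)$, which in turn is determined by the equisingularity classes of the $a_i$ and the listed intersection data (a common branch of several $a_i$, with its multiplicity in $g$, is visible from the contact numbers between branches of different generators). So the crux is $BP(\fa')$. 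Now inspect the input that Algorithm~\ref{A1} actually consumes in steps (i)--(v): it needs the cluster $\overline K$ of singular points of $(f_1\cdots f_r)_{\mathrm{red}}$ and, for each $p\in\overline K$ and each generator $f_i$, the virtual value $v_p(f_i)$; everything afterwards (Propositions~\ref{prop:missing-free-points} and~\ref{prop:missing-sat-points}, the computation of $v_p=\min_i v_p(f_i)$ and $e_p=v_p-\sum_{p\to q}v_q$) is a purely combinatorial procedure on those numbers. Therefore $BP(\fa')$, hence $BP(\fa)$, and hence the equisingularity class of a generic element, is a function of the tuple $\big(\{v_p(f_i)\}_{p,i}\big)$ together with the proximity relations among the points of $\overline K$.

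It remains to see that this tuple is itself recoverable from the equisingularity class of each generator $a_i$ (equivalently each $f_i$, since dividing by the common factor $g$ is a combinatorial operation on clusters) and the intersection multiplicities $[a_i\cdot a_j]_O$ for $i\neq j$. The equisingularity class of $f_i$ gives the proximity matrix and multiplicity vector of its own minimal log-resolution, hence the values $v_p(f_i)$ at every point $p$ of $\mathcal{S}(\xi_i)$; what is missing is the value of $f_i$ at infinitely near points $p$ that lie on $f_j$ but not on $f_i$. But such a $p$ appears along the common part of the two clusters, and by Noether's formula the sequence of multiplicities $e_p(f_i)$ along that common portion is exactly what is constrained by $[f_i\cdot f_j]_O=\sum_p e_p(f_i)e_p(f_j)$ together with the individual equisingularity data; once the branches of each generator and their pairwise contacts are fixed, the whole configuration $\overline K$ and all the values $v_p(f_i)$ are determined. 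Feeding this data into Algorithm~\ref{A1} and invoking Theorem~\ref{thm:algorithm} yields $BP(\fa)$, and a generic element going sharply through it has the asserted equisingularity class.

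The main obstacle is the last paragraph: making precise that the values $v_p(f_i)$ at points \emph{off} the generator $f_i$ are genuinely forced by the equisingularity classes plus the pairwise contact numbers, rather than requiring the mutual positions of \emph{all} the clusters simultaneously. The cleanest way to handle it is to note that the relevant extra points $p$ always sit on the common subcluster of $\mathcal{S}(\xi_i)$ and $\mathcal{S}(\xi_j)$ for the single $j$ realizing the contact, so $v_p(f_i)$ is read from $\mathcal{S}(\xi_i)$ directly (it is a point lying on $\xi_i$), and the only new datum needed to locate $p$ within $\cN_O$ is how far the branches of $f_i$ and $f_j$ share infinitely near points --- precisely the intersection multiplicity. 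Once this bookkeeping is in place the corollary follows formally.
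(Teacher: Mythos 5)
Your proposal is correct and follows essentially the same route as the paper: generic elements go sharply through $BP(\fa)$, and by Theorem~\ref{thm:algorithm} the cluster $BP(\fa)$ is computed from data (the cluster of singular points of $(f_1\cdots f_r)_{\rm red}$ and the values $v_p(f_i)$) that, by the classical result quoted just before the corollary, is determined by the equisingularity class of each generator and the pairwise intersection multiplicities of branches from different generators. You simply make explicit several steps the paper leaves implicit --- the reduction to the $\fM$-primary case via $BP(\fa)=BP((g))+BP(\fa')$ and the verification that Algorithm~\ref{A1} consumes only the stated data --- and your bookkeeping for values at points off a given $\xi_i$ (which are anyway forced recursively by the proximity relations once the configuration $\overline K$ is known) is sound, if a bit more laboured than necessary.
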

\begin{proof}
Generic elements in $\mathfrak{a}$ go sharply through the weighted cluster of base points
$BP(\mathfrak{a})$. By Theorem \ref{thm:algorithm}, the relative position of the infinitely near
points in $BP(\mathfrak{a})$ and the multiplicities, or values, are completely determined by
the equisingularity class of each $a_i$ and the intersection multiplicities between any pair of branches
of different generators.
\end{proof}

\begin{example}
  Consider the ideal $$\mathfrak{a} = (a_1, a_2, a_3) = \big((y^5 + x^7)^2  + y^{10}x, x^8(y^3 + x^5), y^8(y^2 - x^3)\big) \subseteq \mathbb{C}\{x, y\}.$$
  The steps of Algorithm \ref{A1} are performed as follows:

  \vskip 2mm

\begin{enumerate}
  \item  We have that $g = \gcd(a_1, a_2, a_3) = 1$, so the ideal is $\fM$-primary. Then, we compute the product
  of the generators $f = a_1 a_2 a_3$.
  \vskip 2mm
  \item The cluster $\overline{K}$ of singular points of $\xi_{\textrm{red}}$ with $\xi : f = 0$ is described by means
  of the proximity matrix:

\setlength{\arraycolsep}{0.0pt}
\setcounter{MaxMatrixCols}{21}
    \[
    P_{\overline{K}} = \;
  {\tiny
       \begin{bmatrix*}[C]
        1 & 0 & 0 & 0 & 0 & 0 & 0 & 0 & 0 & 0 & 0 & 0 & 0 & 0 & 0\\
       -1 & 1 & 0 & 0 & 0 & 0 & 0 & 0 & 0 & 0 & 0 & 0 & 0 & 0 & 0\\
       -1 & 0 & 1 & 0 & 0 & 0 & 0 & 0 & 0 & 0 & 0 & 0 & 0 & 0 & 0\\
        0 & 0 & -1 & 1 & 0 & 0 & 0 & 0 & 0 & 0 & 0 & 0 & 0 & 0 & 0\\
       -1 & 0 & -1 & 0 & 1 & 0 & 0 & 0 & 0 & 0 & 0 & 0 & 0 & 0 & 0\\
       -1 & 0 & 0 & 0 & -1 & 1 & 0 & 0 & 0 & 0 & 0 & 0 & 0 & 0 & 0\\
        0 & 0 & 0 & 0 & -1 & -1 & 1 & 0 & 0 & 0 & 0 & 0 & 0 & 0 & 0\\
        0 & 0 & 0 & 0 & 0 & 0 & -1 & 1 & 0 & 0 & 0 & 0 & 0 & 0 & 0\\
        0 & 0 & 0 & 0 & 0 & 0 & 0 & -1 & 1 & 0 & 0 & 0 & 0 & 0 & 0\\
        0 & 0 & 0 & 0 & 0 & 0 & 0 & 0 & -1 & 1 & 0 & 0 & 0 & 0 & 0\\
        0 & 0 & 0 & 0 & 0 & 0 & 0 & 0 & -1 & -1 & 1 & 0 & 0 & 0 & 0\\
        0 & 0 & 0 & 0 & 0 & 0 & 0 & 0 & 0 & 0 & -1 & 1 & 0 & 0 & 0\\
        0 & 0 & 0 & 0 & -1 & 0 & 0 & 0 & 0 & 0 & 0 & 0 & 1 & 0 & 0\\
        0 & 0 & -1 & 0 & -1 & 0 & 0 & 0 & 0 & 0 & 0 & 0 & 0 & 1 & 0\\
        0 & 0 & 0 & 0 & 0 & 0 & 0 & 0 & 0 & 0 & 0 & 0 & 0 & -1 & 1\\
       \end{bmatrix*}}.
     \]
  The  virtual values $\{v_p(a_i)\}_{p \in \overline{K}}, i = 1, 2, 3 $ are the following:
     \[
        v(a_1) = {\scriptstyle
        [10\ 10\  14\ 14\ 28\ 40\  70\ 72\  74\  75\ 150\ 151\ 28\ 42\ 42]^{T},
        }
     \]
          \[
        v(a_2) = {\scriptstyle
        [11\ 19\ 13\  13\  25\ 36\ 61\ 61\ 61\  61\  122\ 122\ 25\ 39\ 40]^{T},
        }
     \]
      \[
        v(a_3) = {\scriptstyle
        [10\ 10\ 19\  27\ 30\ 40\  70\ 70\ 70\ 70\  140\ 140\ 31\ 49\ 49]^{T}.
        }
     \]
 Therefore, $v_p = \min_{i}\{v_p(f_i)\}$ for $p \in \overline{K}$ is:
   \[
        v = {\scriptstyle
        [10\ 10\ 13\ 13\ 25\ 36\ 61\ 61\ 61\ 61\ 122\ 122\ 25\ 39\ 40]^{T}.
        }
     \]
The corresponding weighted cluster  $\overline{\mathcal{K}}= (\overline{K},v)$ is represented
using the dual graph:

    \tikzstyle{dual}=[circle, draw, fill=black!100, inner sep=0pt, minimum width=4pt]

  \begin{center}
    \begin{tikzpicture}
      \draw (-3,0) node[dual]{} -- (-2,0) node[dual]{} -- (-1,0) node[dual] {} -- (0,0) node[dual] {} -- (1,0) node[dual] {} -- (2,0) node[dual] {} -- (3,0) node[dual] {} -- (4,0) node[dual] {};
      \draw (1, 0) node[dual]{} -- (1,1) node[dual]{};
      \draw (2, 0) node[dual]{} -- (2,1) node[dual]{};
      \draw (0, 0) node[dual]{} -- (0,-1) node[dual]{} -- (0,-2) node[dual]{} -- (0,-3) node[dual]{} -- (0,-4) node[dual]{};
      \draw (-1,-3) node[dual]{} -- (0,-3) node[dual]{};

      \draw (-3,-0.3) node {\footnotesize $10$};
      \draw (-2,-0.3) node {\footnotesize $10$};
      \draw (-1,-0.3) node {\footnotesize $36$};
      \draw (0,0.3) node {\footnotesize $61$};
      \draw (1,-0.3) node {\footnotesize $25$};
      \draw (2,-0.3) node {\footnotesize $39$};
      \draw (3,-0.3) node {\footnotesize $13$};
      \draw (4,-0.3) node {\footnotesize $13$};
      \draw (0.3,-1) node {\footnotesize $61$};
      \draw (0.3,-2) node {\footnotesize $61$};
      \draw (0.4,-3) node {\footnotesize $122$};
      \draw (0.3,-4) node {\footnotesize $61$};
      \draw (-1,-2.7) node {\footnotesize $122$};
      \draw (1,1.3) node {\footnotesize $25$};
      \draw (2,1.3) node {\footnotesize $40$};
    \end{tikzpicture}
  \end{center}
\vskip 2mm
  \item There are two missing free base points. The cluster $K'$ is given by the proximity matrix:

    \[
    P_{K'} = \;
  {\tiny
       \begin{bmatrix*}[C]
        1 & 0 & 0 & 0 & 0 & 0 & 0 & 0 & 0 & 0 & 0 & 0 & 0 & 0 & 0 & 0 & 0\\
       -1 & 1 & 0 & 0 & 0 & 0 & 0 & 0 & 0 & 0 & 0 & 0 & 0 & 0 & 0 & 0 & 0\\
       -1 & 0 & 1 & 0 & 0 & 0 & 0 & 0 & 0 & 0 & 0 & 0 & 0 & 0 & 0 & 0 & 0\\
        0 & 0 & -1 & 1 & 0 & 0 & 0 & 0 & 0 & 0 & 0 & 0 & 0 & 0 & 0 & 0 & 0\\
       -1 & 0 & -1 & 0 & 1 & 0 & 0 & 0 & 0 & 0 & 0 & 0 & 0 & 0 & 0 & 0 & 0\\
       -1 & 0 & 0 & 0 & -1 & 1 & 0 & 0 & 0 & 0 & 0 & 0 & 0 & 0 & 0 & 0 & 0\\
        0 & 0 & 0 & 0 & -1 & -1 & 1 & 0 & 0 & 0 & 0 & 0 & 0 & 0 & 0 & 0 & 0\\
        0 & 0 & 0 & 0 & 0 & 0 & -1 & 1 & 0 & 0 & 0 & 0 & 0 & 0 & 0 & 0 & 0\\
        0 & 0 & 0 & 0 & 0 & 0 & 0 & -1 & 1 & 0 & 0 & 0 & 0 & 0 & 0 & 0 & 0\\
        0 & 0 & 0 & 0 & 0 & 0 & 0 & 0 & -1 & 1 & 0 & 0 & 0 & 0 & 0 & 0 & 0\\
        0 & 0 & 0 & 0 & 0 & 0 & 0 & 0 & -1 & -1 & 1 & 0 & 0 & 0 & 0 & 0 & 0\\
        0 & 0 & 0 & 0 & 0 & 0 & 0 & 0 & 0 & 0 & -1 & 1 & 0 & 0 & 0 & 0 & 0\\
        0 & 0 & 0 & 0 & -1 & 0 & 0 & 0 & 0 & 0 & 0 & 0 & 1 & 0 & 0 & 0 & 0\\
        0 & 0 & -1 & 0 & -1 & 0 & 0 & 0 & 0 & 0 & 0 & 0 & 0 & 1 & 0 & 0 & 0\\
        0 & 0 & 0 & 0 & 0 & 0 & 0 & 0 & 0 & 0 & 0 & 0 & 0 & -1 & 1 & 0 & 0\\
        0 & 0 & 0 & 0 & 0 & 0 & 0 & 0 & 0 & 0 & 0 & 0 & 0 & 0 & -1 & 1 & 0\\
        0 & 0 & 0 & 0 & 0 & 0 & 0 & 0 & 0 & 0 & 0 & 0 & 0 & 0 & 0 & -1 & 1\\
       \end{bmatrix*}}.
  \]
The  virtual values are:
     \[
        v(a_1) = {\scriptstyle
        [10\ 10\ 14\ 14\ 28\ 40\ 70\ 72\ 74\ 75\ 150\ 151\ 28\ 42\ 42\ 42\ 42]^{T},
        }
     \]
          \[
        v(a_2) = {\scriptstyle
        [11\ 19\ 13\ 13\ 25\ 36\ 61\ 61\ 61\ 61\ 122\ 122\ 25\ 39\ 40\ 41\ 42]^{T},
        }
     \]
      \[
        v(a_3) = {\scriptstyle
        [10\ 10\ 19\ 27\ 30\ 40\ 70\ 70\ 70\ 70\ 140\ 140\ 31\ 49\ 49\ 49\ 49]^{T}.
        }
     \]
     Thus, we have
   \[
        v = {\scriptstyle
        [10\ 10\ 13\ 13\ 25\ 36\ 61\ 61\ 61\ 61\ 122\ 122\ 25\ 39\ 40\ 41\ 42]^{T},
        }
     \]
and the corresponding weighted cluster $\mathcal{K}' = (K',v)$ is represented by

   \tikzstyle{dual}=[circle, draw, fill=black!100, inner sep=0pt, minimum width=4pt]

  \begin{center}
    \begin{tikzpicture}
      \draw (-3,0) node[dual]{} -- (-2,0) node[dual]{} -- (-1,0) node[dual] {} -- (0,0) node[dual] {} -- (1,0) node[dual] {} -- (2,0) node[dual] {} -- (3,0) node[dual] {} -- (4,0) node[dual] {};
      \draw (1, 0) node[dual]{} -- (1,1) node[dual]{};
      \draw (2, 0) node[dual]{} -- (2,1) node[dual]{} -- (2,2) node[dual]{} -- (2,3) node[dual]{};
      \draw (0, 0) node[dual]{} -- (0,-1) node[dual]{} -- (0,-2) node[dual]{} -- (0,-3) node[dual]{} -- (0,-4) node[dual]{};
      \draw (-1,-3) node[dual]{} -- (0,-3) node[dual]{};

      \draw (-3,-0.3) node {\footnotesize $10$};
      \draw (-2,-0.3) node {\footnotesize $10$};
      \draw (-1,-0.3) node {\footnotesize $36$};
      \draw (0,0.3) node {\footnotesize $61$};
      \draw (1,-0.3) node {\footnotesize $25$};
      \draw (2,-0.3) node {\footnotesize $39$};
      \draw (3,-0.3) node {\footnotesize $13$};
      \draw (4,-0.3) node {\footnotesize $13$};
      \draw (0.3,-1) node {\footnotesize $61$};
      \draw (0.3,-2) node {\footnotesize $61$};
      \draw (0.4,-3) node {\footnotesize $122$};
      \draw (0.3,-4) node {\footnotesize $61$};
      \draw (-1,-2.7) node {\footnotesize $122$};
      \draw (1,1.3) node {\footnotesize $25$};
      \draw (2.3,1) node {\footnotesize $40$};
      \draw (2.3,2) node {\footnotesize $41$};
      \draw (2.3,3) node {\footnotesize $42$};
    \end{tikzpicture}
  \end{center}
\vskip 2mm

  \item There are four missing satellite base points. The cluster $K''$ is given by:
    \[
    P_{K''} = \;
  {\tiny
       \begin{bmatrix*}[C]
        1 & 0 & 0 & 0 & 0 & 0 & 0 & 0 & 0 & 0 & 0 & 0 & 0 & 0 & 0 & 0 & 0 & 0 & 0 & 0 & 0\\
       -1 & 1 & 0 & 0 & 0 & 0 & 0 & 0 & 0 & 0 & 0 & 0 & 0 & 0 & 0 & 0 & 0 & 0 & 0 & 0 & 0\\
       -1 & 0 & 1 & 0 & 0 & 0 & 0 & 0 & 0 & 0 & 0 & 0 & 0 & 0 & 0 & 0 & 0 & 0 & 0 & 0 & 0\\
        0 & 0 & -1 & 1 & 0 & 0 & 0 & 0 & 0 & 0 & 0 & 0 & 0 & 0 & 0 & 0 & 0 & 0 & 0 & 0 & 0\\
       -1 & 0 & -1 & 0 & 1 & 0 & 0 & 0 & 0 & 0 & 0 & 0 & 0 & 0 & 0 & 0 & 0 & 0 & 0 & 0 & 0\\
       -1 & 0 & 0 & 0 & -1 & 1 & 0 & 0 & 0 & 0 & 0 & 0 & 0 & 0 & 0 & 0 & 0 & 0 & 0 & 0 & 0\\
        0 & 0 & 0 & 0 & -1 & -1 & 1 & 0 & 0 & 0 & 0 & 0 & 0 & 0 & 0 & 0 & 0 & 0 & 0 & 0 & 0\\
        0 & 0 & 0 & 0 & 0 & 0 & -1 & 1 & 0 & 0 & 0 & 0 & 0 & 0 & 0 & 0 & 0 & 0 & 0 & 0 & 0\\
        0 & 0 & 0 & 0 & 0 & 0 & 0 & -1 & 1 & 0 & 0 & 0 & 0 & 0 & 0 & 0 & 0 & 0 & 0 & 0 & 0\\
        0 & 0 & 0 & 0 & 0 & 0 & 0 & 0 & -1 & 1 & 0 & 0 & 0 & 0 & 0 & 0 & 0 & 0 & 0 & 0 & 0\\
        0 & 0 & 0 & 0 & 0 & 0 & 0 & 0 & -1 & -1 & 1 & 0 & 0 & 0 & 0 & 0 & 0 & 0 & 0 & 0 & 0\\
        0 & 0 & 0 & 0 & 0 & 0 & 0 & 0 & 0 & 0 & -1 & 1 & 0 & 0 & 0 & 0 & 0 & 0 & 0 & 0 & 0\\
        0 & 0 & 0 & 0 & -1 & 0 & 0 & 0 & 0 & 0 & 0 & 0 & 1 & 0 & 0 & 0 & 0 & 0 & 0 & 0 & 0\\
        0 & 0 & -1 & 0 & -1 & 0 & 0 & 0 & 0 & 0 & 0 & 0 & 0 & 1 & 0 & 0 & 0 & 0 & 0 & 0 & 0\\
        0 & 0 & 0 & 0 & 0 & 0 & 0 & 0 & 0 & 0 & 0 & 0 & 0 & -1 & 1 & 0 & 0 & 0 & 0 & 0 & 0\\
        0 & 0 & 0 & 0 & 0 & 0 & 0 & 0 & 0 & 0 & 0 & 0 & 0 & 0 & -1 & 1 & 0 & 0 & 0 & 0 & 0\\
        0 & 0 & 0 & 0 & 0 & 0 & 0 & 0 & 0 & 0 & 0 & 0 & 0 & 0 & 0 & -1 & 1 & 0 & 0 & 0 & 0\\
        -1 & 0 & 0 & 0 & 0 & -1 & 0 & 0 & 0 & 0 & 0 & 0 & 0 & 0 & 0 & 0 & 0 & 1 & 0 & 0 & 0\\
        -1 & 0 & 0 & 0 & 0 & 0 & 0 & 0 & 0 & 0 & 0 & 0 & 0 & 0 & 0 & 0 & 0 & -1 & 1 & 0 & 0\\
        -1 & 0 & 0 & 0 & 0 & 0 & 0 & 0 & 0 & 0 & 0 & 0 & 0 & 0 & 0 & 0 & 0 & 0 & -1 & 1 & 0\\
        -1 & 0 & 0 & 0 & 0 & 0 & 0 & 0 & 0 & 0 & 0 & 0 & 0 & 0 & 0 & 0 & 0 & 0 & 0 & -1 & 1\\
       \end{bmatrix*}}.
  \]
The  virtual values are:
     \[
        v(a_1) = {\scriptstyle
        [10\ 10\ 14\ 14\ 28\ 40\ 70\ 72\ 74\ 75\  150\ 151\ 28\ 42\ 42\ 42\ 42\ 50\ 60\ 70\ 80]^{T},
        }
     \]
          \[
        v(a_2) = {\scriptstyle
        [11\ 19\ 13\ 13\ 25\ 36\ 61\ 61\ 61\ 61\  122\ 122\ 25\ 39\ 40\ 41\ 42\ 47\ 58\ 69\ 80]^{T},
        }
     \]
      \[
        v(a_3) = {\scriptstyle
        [10\ 10\ 19\ 27\ 30\ 40\ 70\ 70\ 70\ 70\ 140\ 140\ 31\ 49\ 49\ 49\ 49\ 50\ 60\ 70\ 80]^{T}.
        }
     \]
          Thus, we have:
   \[
        v = {\scriptstyle
        [10\ 10\ 13\ 13\ 25\ 36\ 61\ 61\ 61\ 61\ 122\ 122\ 25\ 39\ 40\ 41\ 42\ 47\ 58\ 69\ 80]^{T},
         } \]
and the corresponding weighted cluster $\mathcal{K}'' = (K'',v)$ is represented by

\tikzstyle{dual}=[circle, draw, fill=black!100, inner sep=0pt, minimum width=4pt]

  \begin{center}
    \begin{tikzpicture}
      \draw  (-7,0) node[dual]{} --  (-6,0) node[dual]{} --  (-5,0) node[dual]{} --  (-4,0) node[dual]{} -- (-3,0) node[dual]{} -- (-2,0) node[dual]{} -- (-1,0) node[dual] {} -- (0,0) node[dual] {} -- (1,0) node[dual] {} -- (2,0) node[dual] {} -- (3,0) node[dual] {} -- (4,0) node[dual] {};
      \draw (1, 0) node[dual]{} -- (1,1) node[dual]{};
      \draw (2, 0) node[dual]{} -- (2,1) node[dual]{} -- (2,2) node[dual]{} -- (2,3) node[dual]{};
      \draw (0, 0) node[dual]{} -- (0,-1) node[dual]{} -- (0,-2) node[dual]{} -- (0,-3) node[dual]{} -- (0,-4) node[dual]{};
      \draw (-1,-3) node[dual]{} -- (0,-3) node[dual]{};

      \draw (-7,-0.3) node {\footnotesize $10$};
      \draw (-6,-0.3) node {\footnotesize $10$};
      \draw (-5,-0.3) node {\footnotesize $47$};
      \draw (-4,-0.3) node {\footnotesize $58$};
      \draw (-3,-0.3) node {\footnotesize $69$};
      \draw (-2,-0.3) node {\footnotesize $80$};
      \draw (-1,-0.3) node {\footnotesize $36$};
      \draw (0,0.3) node {\footnotesize $61$};
      \draw (1,-0.3) node {\footnotesize $25$};
      \draw (2,-0.3) node {\footnotesize $39$};
      \draw (3,-0.3) node {\footnotesize $13$};
      \draw (4,-0.3) node {\footnotesize $13$};
      \draw (0.3,-1) node {\footnotesize $61$};
      \draw (0.3,-2) node {\footnotesize $61$};
      \draw (0.4,-3) node {\footnotesize $122$};
      \draw (0.3,-4) node {\footnotesize $61$};
      \draw (-1,-2.7) node {\footnotesize $122$};
      \draw (1,1.3) node {\footnotesize $25$};
      \draw (2.3,1) node {\footnotesize $40$};
      \draw (2.3,2) node {\footnotesize $41$};
      \draw (2.3,3) node {\footnotesize $42$};
    \end{tikzpicture}
  \end{center}
\vskip 2mm

  \item Using the base change formula $e^T = P_{K''} v^T$ we get
 \[
        e = {\scriptstyle
        [10\  0\  3\  0\  2\  1\  0\  0\  0\  0\  0\  0\  0\  1\  1\  1\  1\  1\  1\  1\  1]^{T}.
         } \]
Thus, erasing the points with multiplicity zero, we finally obtain
the weighted cluster $BP(\fa)=\mathcal{K}=(K,v)$ represented by the proximity matrix
and the vector of values
    \[
    P_{K} = \;
  {\tiny
       \begin{bmatrix*}[C]
              1 & 0 & 0 & 0 & 0 & 0 & 0 & 0 & 0 & 0 & 0 & 0\\
             -1 & 1 & 0 & 0 & 0 & 0 & 0 & 0 & 0 & 0 & 0 & 0\\
             -1 & -1 & 1 & 0 & 0 & 0 & 0 & 0 & 0 & 0 & 0 & 0\\
             -1 & 0 & -1 & 1 & 0 & 0 & 0 & 0 & 0 & 0 & 0 & 0\\
              0 & -1 & -1 & 0 & 1 & 0 & 0 & 0 & 0 & 0 & 0 & 0\\
              0 & 0 & 0 & 0 & -1 & 1 & 0 & 0 & 0 & 0 & 0 & 0\\
              0 & 0 & 0 & 0 & 0 & -1 & 1 & 0 & 0 & 0 & 0 & 0\\
              0 & 0 & 0 & 0 & 0 & 0 & -1 & 1 & 0 & 0 & 0 & 0\\
             -1 & 0 & 0 & -1 & 0 & 0 & 0 & 0 & 1 & 0 & 0 & 0\\
             -1 & 0 & 0 & 0 & 0 & 0 & 0 & 0 & -1 & 1 & 0 & 0\\
             -1 & 0 & 0 & 0 & 0 & 0 & 0 & 0 & 0 & -1 & 1 & 0\\
             -1 & 0 & 0 & 0 & 0 & 0 & 0 & 0 & 0 & 0 & -1 & 1
       \end{bmatrix*},
       \qquad
    v = \begin{bmatrix*}
        10\\
        13\\
        25\\
        36\\
        39\\
        40\\
        41\\
        42\\
        47\\
        58\\
        69\\
        80
        \end{bmatrix*}
  }
     \]
  or equivalently, by the dual graph:

   \tikzstyle{dual}=[circle, draw, fill=black!100, inner sep=0pt, minimum width=4pt]

 \begin{center}
    \begin{tikzpicture}
      \draw  (-5,0) node[dual]{} --  (-4,0) node[dual]{} --  (-3,0) node[dual]{} -- (-2,0) node[dual]{} -- (-1,0) node[dual]{} -- (0,0) node[dual] {} -- (1,0) node[dual] {} -- (2,0) node[dual] {} -- (3,0) node[dual] {};
      \draw (2, 0) node[dual]{} -- (2,1) node[dual]{} -- (2,2) node[dual]{} -- (2,3) node[dual]{};

      \draw (-5,-0.3) node {\footnotesize $10$};
      \draw (-4,-0.3) node {\footnotesize $47$};
      \draw (-3,-0.3) node {\footnotesize $58$};
      \draw (-2,-0.3) node {\footnotesize $69$};
      \draw (-1,-0.3) node {\footnotesize $80$};
      \draw (0,-0.3) node {\footnotesize $36$};
      \draw (1,-0.3) node {\footnotesize $25$};
      \draw (2,-0.3) node {\footnotesize $39$};
      \draw (3,-0.3) node {\footnotesize $13$};
      \draw (2.3,1) node {\footnotesize $40$};
      \draw (2.3,2) node {\footnotesize $41$};
      \draw (2.3,3) node {\footnotesize $42$};
    \end{tikzpicture}
  \end{center}
\vskip 2mm
\end{enumerate}
\end{example}

\section{Newton-Puiseux revisited} \label{sec5}

If we take a closer look at Algorithm \ref{A1} we see that all the steps
can be effectively computed once we have a precise description of the weighted cluster
from step ${\rm ii)}$.  The aim of this section is to provide an algorithm
that solves the following problem:

\vskip 2mm

{\it  Given a set of elements $f_1,\dots, f_r \in \bC\{x,y\}$, provide a method to
compute the weighted cluster $\overline{K}$ associated
to the reduced germ  of $\xi: f = f_1 \cdots f_r = 0$ and the systems of virtual values
$\{v_p(f_i)\}_{p \in \overline{K}}$, for $i = 1, \dots, r$.}

\vskip 2mm

We point out that in the case that $f$ is already reduced, we can compute the cluster
$\overline{K}$ using the Newton-Puiseux algorithm and Enriques' theorem \cite[\S1 and \S5.5]{Cas00}.
Computer algebra systems such as {\tt Singular} \cite{DGPS} or {\tt Magma} \cite{Magma} can do the job.
However, we are in a more general situation that requires some extra work.
The Puiseux factorization theorem \cite[\S 1.5]{Cas00} states that any
$g \in \mathbb{C}\{x, y\}$ can be decomposed as
\begin{equation} \label{eq:puiseux-factorization}
g(x, y) = u x^{\alpha_0} g_1^{\alpha_1} \cdots g_\ell^{\alpha_\ell} = u x^{\alpha_0} \prod_{i = 1}^\ell \prod_{j = 1}^{\nu_i} ( y - \sigma_i^j(s_{i}) )^{\alpha_i}, \quad \alpha_1, \dots, \alpha_l \in \mathbb{N}
\end{equation}
where $u \in \mathbb{C}\{x, y\}$ is a unit, $g_1, \dots, g_\ell \in \mathbb{C}\{x, y\}$
are irreducible, $s_{i} \in \mathbb{C}\langle\langle x \rangle\rangle$ are Puiseux series
such that $g_i(x, s_{i}(x)) = 0$, $\nu_i = \textrm{ord}_y(g_i(0, y))$, and $\sigma_i^j$ is the
automorphism of $\mathbb{C}((x^{1/\nu_i}))$ generated by $x^{1/\nu_i} \mapsto e^{2\pi \sqrt{-1} j/\nu_i} x^{1/\nu_i}$.

\vskip 2mm

From the above factorization one can compute the required cluster of singular points and  systems of
virtual values. It is a classical result, see \cite[\S 5.5]{Cas00}, that the Puiseux series $s_i, i = 1, \dots, \ell$
completely determine the cluster of singular points of $\eta_{\rm red}$, where $\eta: g = 0$. In order to compute the virtual
values $v_p(g)$ for any singular point $p$ of $\eta_{\rm red}$ we can use the fact that $v_p$ are valuations, thus

\begin{equation} \label{eq:alg_mult}
v_p(g) = \alpha_{0} v_p(x) + \alpha_{1} v_p(g_{1}) + \cdots + \alpha_{\ell} v_p(g_{\ell}).
\end{equation}

\vskip 2mm

\noindent In addition, the values $v_p(x), v_p(g_{i}), i = 1, \dots, \ell$, can also be deduced from their
associated Puiseux series $s_i$ and the cluster of singular points of $\eta_{\rm red}$. Notice that the algebraic
multiplicities $\alpha_i$ play their role in Equation \ref{eq:alg_mult}.

\vskip 2mm

The Newton-Puiseux algorithm, that traditionally has been used to obtain Puiseux decompositions, only works
for reduced elements. This means that you cannot recover the algebraic multiplicities of the Puiseux
series in Equation \ref{eq:puiseux-factorization}. Another problem that arises when applying the Newton-Puiseux
algorithm to a product $f = f_1 \cdots f_r$ is that you cannot find which factor $f_i$ contains each resulting Puiseux series.

\vskip 2mm

To overcome such inconvenients, we will present a modified version of the Newton-Puiseux
algorithm that, given a set of elements $f_1, \dots, f_r \in \mathbb{C}\{x, y\}$ not
necessarily reduced or irreducible, will compute the Puiseux decomposition of the product
$f = f_1 \cdots f_r$, that is, the Puiseux series of $f$ together with their algebraic
multiplicities in each of the factors $f_1, \dots, f_r$.

\vskip 2mm

The Newton-Puiseux algorithm is obviously restricted to compute a partial sum of each
Puiseux series in the decomposition \ref{eq:puiseux-factorization} as the series are potentially infinite. Thus, the
algorithm computes enough terms of each series so they do not share terms from a certain
degree onward. In this situation we will say that the series have been pair-wise \emph{separated}.
In particular, this means that a partial sum of Puiseux series  $s$ might be enough to separate
$s$ inside a factor, but not inside the whole product $f = f_1 \cdots f_r$. Hence, applying the
Newton-Puiseux algorithm to the factors $f_1, \dots, f_r$ does not provide as much information as
applying the Newton-Puiseux algorithm to the product. Similarly, if one obtains just the Puiseux
series of the product it is not possible to recover the Puiseux decomposition of each factor.
The modification of the Newton-Puiseux algorithm that we will present  provides all the information
needed to recover both the decomposition of each factors and the decomposition of the whole product at the same time.
One of the key ingredients is the \emph{square-free factorization}.

\begin{definition}
Let $R$ be a unique factorization domain. The \emph{square-free factorization} of an element $h \in R[[x]]$ is
\begin{equation} \label{eq:sqf-factorization}
h = h_1 h_2^2 \cdots h_n^n,
\end{equation}
such that $h_i \in R[[x]], i = 1, \dots, n$ are reduced, pair-wise coprime elements, and $h_n$ is a non-unit.
\end{definition}

Notice that some of the $h_i, i = 1, \dots, n-1$ can be units. The non-unit factors
in Equation \ref{eq:sqf-factorization} will be called \emph{square-free factors} and are unique up to
multiplication by a unit.

\vskip 2mm

We will not explain all the details for the traditional Newton-Puiseux algorithm, for that we refer the reader
 to \cite[1.5]{Cas00}. We will just recall that it is an iterative algorithm
that at the $i$-th step computes the $i$-th term of one of the Puiseux series $s$. The first term of
$s(x) =: s^{(0)}(x_0)$ is computed from $f(x, y) =: f^{(0)}(x_0, y_0)$, and the
$i$-th term of $s$ is computed as the first term of
$s^{(i)}(x_i) \in \mathbb{C}\langle\langle x_i \rangle\rangle$ from
$f^{(i)}(x_i, y_i) \in \mathbb{C}\{x_i, y_i\}$ which are defined recursively
from $s^{(i-1)}$ and $f^{(i-1)}(x_{i-1}, y_{i-1})$ by means of a change of variables.

\vskip 2mm

The basic idea behind our new algorithm is to apply the traditional
Newton-Puiseux algorithm to the reduced part of $f$, $\bar{f}$, while the square-free
factors of each $f_i, i = 1, \dots, r$ are transformed using the changes of variables given
by $\bar{f}$. The Newton-Puiseux algorithm applied on $\bar{f}$ will tell when all the branches have been separated,
i.e. the stopping condition. The square-free factors will encode, at the end, the algebraic
multiplicities of the resulting Puiseux series in each factor.

\vspace{0.5cm}

The modified Newton-Puiseux algorithm works as follows:

\vskip 2mm

\begin{itemize}

\item Compute the element $f = f_1 \cdots f_r$ and
$\bar{f} = f/\gcd(f, \frac{\partial f}{\partial y}, \frac{\partial f}{\partial x})$.
Define $x_0 := x, y_0 := y, f^{(0)} := \bar{f}$, and
\begin{align}
S^{(0)} := \{ h_{j, k} \in \mathbb{C}\{x_0, y_0\}\ | & \ h_{j, k} \textrm{ square-free factor of } f_k, k = 1, \dots, r \nonumber \\
  & \textrm{ with multiplicity}\ j \in \mathbb{N}\}
\end{align}

\item \textbf{Step (i)}: The $i$-th iteration runs as in the traditional algorithm and we
compute $x_{i+1}, y_{i+1}$ and $f^{(i+1)}$. In addition, we compute $S^{(i+1)}$ from $S^{(i)}$ in the following way:
\begin{align}
S^{(i+1)} = \{ x_{i+1}^{-\beta_{i, j, k}}h_{j,k}^{(i)}(x_{i+1}, y_{i+1}) \in \mathbb{C}\{x_{i+1}, y_{i+1}\}\ |\
 & x_{i+1}^{\beta_{i, j, k} + 1} \not |\ h_{j, k}^{(i)}(x_{i+1}, y_{i+1}), \nonumber \\
 & h_{j, k}^{(i+1)} \textrm{ non-unit}, h_{j, k}^{(i)} \in S^{(i)} \} \nonumber
\end{align}

\item The algorithm ends at the same step the traditional Newton-Puiseux algorithm ends for
the reduced part $\bar{f}$.
\end{itemize}

\vspace{0.5cm}

In order to prove the correctness of this modification we will need the following results.

\begin{lemma}[{\cite[1.6.3]{Cas00}}] \label{lemma:puiseux-mult}
For any $j > i \ge 0$, the multiplicity of $s^{(i)}$ as Puiseux series of $f^{(i)}$ equals the multiplicity
of $s^{(j)}$ as Puiseux series of $f^{(j)}$.
\end{lemma}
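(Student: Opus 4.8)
\emph{Proof idea.} The multiplicity of a Puiseux series is a property of a single branch, so it suffices to treat one elementary Newton--Puiseux step, $j=i+1$, and then compose. Recall that such a step consists of the substitution $x_i=x_{i+1}^{n_i}$, $y_i=x_{i+1}^{m_i}(c_i+y_{i+1})$ --- where $c_ix_i^{m_i/n_i}$ is the term of $s^{(i)}$ being removed and $\gcd(m_i,n_i)=1$ --- followed by division by the unique power $x_{i+1}^{\beta_i}$ for which the outcome $f^{(i+1)}$ lies in $\mathbb{C}\{x_{i+1},y_{i+1}\}$ and is not divisible by $x_{i+1}$; moreover $s^{(i+1)}(x_{i+1})=x_{i+1}^{-m_i}s^{(i)}(x_{i+1}^{n_i})-c_i$, which has positive order.

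The first step is to pass to the factorization over the field $K_i=\mathbb{C}\langle\langle x_i\rangle\rangle$ of Puiseux series: by Weierstrass preparation and Puiseux's theorem, $f^{(i)}=U\cdot\prod_{\alpha}(y_i-t_\alpha(x_i))$ with $U$ a unit of $\mathbb{C}\{x_i,y_i\}$ and $t_\alpha\in K_i$ vanishing at $x_i=0$, and then the multiplicity of $s^{(i)}$ as a Puiseux series of $f^{(i)}$ is $\#\{\alpha:\ t_\alpha=s^{(i)}\}$. Plugging the change of variables into the product, each factor becomes $x_{i+1}^{m_i}\big(y_{i+1}-\tau_\alpha\big)$ with $\tau_\alpha:=x_{i+1}^{-m_i}t_\alpha(x_{i+1}^{n_i})-c_i$, a Laurent--Puiseux series. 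Two observations drive the proof: the map $t_\alpha\mapsto\tau_\alpha$ is injective (the ramification $x_i\mapsto x_{i+1}^{n_i}$ is injective on Puiseux series, as is translation), and $\operatorname{ord}_{x_{i+1}}\tau_\alpha>0$ precisely when the leading term of $t_\alpha$ is $c_ix_i^{m_i/n_i}$, whereas $\operatorname{ord}_{x_{i+1}}\tau_\alpha\le 0$ for every other $\alpha$.

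The delicate point --- and the one I expect to be the main obstacle --- is the bookkeeping with the power $x_{i+1}^{\beta_i}$. One must check that, after rescaling $\prod_\alpha(y_{i+1}-\tau_\alpha)$ by the appropriate power of $x_{i+1}$ (distributed as $x_{i+1}^{-\operatorname{ord}\tau_\alpha}$ on each factor of negative order), every factor $y_{i+1}-\tau_\alpha$ with $\operatorname{ord}_{x_{i+1}}\tau_\alpha\le 0$ becomes --- by itself, or in Galois-stable blocks --- a unit (for the order-zero ones since $\tau_\alpha(0)\neq 0$, for the negative-order ones since $x_{i+1}^{-\operatorname{ord}\tau_\alpha}(y_{i+1}-\tau_\alpha)$ is nonzero at the origin), so that these factors are absorbed into the unit part and
$$f^{(i+1)}=V\cdot\prod_{\alpha\in A}\big(y_{i+1}-\tau_\alpha\big),\qquad A:=\{\alpha:\ \operatorname{ord}_{x_{i+1}}\tau_\alpha>0\},$$
with $V$ a unit of $\mathbb{C}\{x_{i+1},y_{i+1}\}$. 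Comparing $y_{i+1}$-degrees with $\operatorname{ord}_{y_{i+1}}f^{(i+1)}(0,y_{i+1})$ confirms that $\beta_i$ is exactly what is needed and that no root is created or destroyed; I would carry this out by inspecting the $x_{i+1}$-orders of the coefficients of $\prod_\alpha(y_{i+1}-\tau_\alpha)$, or else simply invoke the corresponding computation in \cite[\S1.5]{Cas00}.

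It remains to conclude. Since $s^{(i)}$ has leading term $c_ix_i^{m_i/n_i}$ by the very choice of the step, the index $\alpha_0$ with $t_{\alpha_0}=s^{(i)}$ lies in $A$, one has $\tau_{\alpha_0}=s^{(i+1)}$, and every $\beta$ with $t_\beta=t_{\alpha_0}$ also lies in $A$. Therefore the multiplicity of $s^{(i)}$ in $f^{(i)}$ equals $\#\{\beta\in A:\ t_\beta=t_{\alpha_0}\}$, which by injectivity of $t_\beta\mapsto\tau_\beta$ equals $\#\{\beta\in A:\ \tau_\beta=\tau_{\alpha_0}\}$, i.e. the multiplicity of $s^{(i+1)}=\tau_{\alpha_0}$ in $f^{(i+1)}$. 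Composing the elementary steps for $i<i+1<\cdots<j$ yields the statement. (In the algorithm of this paper $f^{(0)}=\bar f$ is reduced, so all these multiplicities are $1$; the lemma is recorded in this generality because the same argument applies verbatim to the square-free factors transported alongside.)
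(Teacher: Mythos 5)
The paper does not give its own argument for this lemma; it is stated purely as a citation to \cite[1.6.3]{Cas00}, so there is no in-paper proof to compare against. Your reconstruction --- reduce to a single elementary Newton--Puiseux step, factor $f^{(i)}$ over $\mathbb{C}\langle\langle x_i\rangle\rangle$ into linear factors $y_i - t_\alpha$, push the substitution $x_i = x_{i+1}^{n_i},\ y_i = x_{i+1}^{m_i}(c_i+y_{i+1})$ through each factor, observe that $t_\alpha \mapsto \tau_\alpha$ is a bijection onto its image, and sort the $\tau_\alpha$ by $x_{i+1}$-order --- is sound and is in the same spirit as Casas-Alvero's treatment, so this is a legitimate filling-in of the black box rather than a conflicting route. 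The one place you flag as delicate (that dividing by $x_{i+1}^{\beta_i}$ removes exactly the Galois-stable blocks of factors with $\operatorname{ord}_{x_{i+1}}\tau_\alpha \le 0$, turning them into a unit of $\mathbb{C}\{x_{i+1},y_{i+1}\}$, and keeps precisely those with positive order as the $y_{i+1}$-roots of $f^{(i+1)}$) is indeed the only non-trivial verification, but it is not a gap in substance: it is precisely the Newton-polygon computation identifying $\beta_i$ with the ordinate of the right endpoint of the chosen side, which \cite[\S1.5]{Cas00} carries out and which you explicitly allow yourself to invoke. Two small points worth making explicit if you were to write this out fully: you use $c_i \ne 0$ (to conclude that $\tau_\alpha$ has order exactly $0$ when $t_\alpha$ has leading exponent $> m_i/n_i$, including $t_\alpha = 0$), which holds because $c_i$ is by construction a nonzero root of the polynomial attached to the chosen side; and for the claim that a negative-order block becomes a unit after multiplying by $x_{i+1}^{-\operatorname{ord}\tau_\alpha}$ you should note that the constant term of the rescaled block is the nonzero leading coefficient of $\prod\tau_\alpha$ over the Galois orbit, so it really is invertible in $\mathbb{C}\{x_{i+1},y_{i+1}\}$.
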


In the current context the following lemma follows from the definitions.

\begin{lemma} \label{lemma:coprimes}
Two elements of $\mathbb{C}\{x, y\}$ are coprime if and only if they share no Puiseux series and no $x$ factor.
\end{lemma}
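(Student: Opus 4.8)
The plan is to read the statement off the Puiseux factorization theorem \eqref{eq:puiseux-factorization} together with the fact that $\mathbb{C}\{x,y\}$ is a UFD. Two elements $f,g \in \mathbb{C}\{x,y\}$ are coprime exactly when they have no common irreducible factor, so it suffices to set up a dictionary between the irreducible factors of an element and its Puiseux series (plus the distinguished factor $x$), and then the lemma becomes a two-line check — which is why the remark preceding it says it follows from the definitions.

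First I would record the dictionary. By \eqref{eq:puiseux-factorization}, any nonzero $f$ factors as $u\,x^{\alpha_0} g_1^{\alpha_1}\cdots g_\ell^{\alpha_\ell}$ with $u$ a unit and each $g_i$ irreducible, not associate to $x$, carrying a Puiseux root $s_i$ with $g_i(x,s_i(x))=0$ and $g_i = \prod_{j=1}^{\nu_i}(y-\sigma_i^j(s_i))$. Conversely, an irreducible element of $\mathbb{C}\{x,y\}$ not associate to $x$ becomes, after Weierstrass preparation, an irreducible Weierstrass polynomial in $y$ whose roots form a single conjugacy class; hence it is determined up to a unit by any one of its Puiseux roots. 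Thus the irreducible factors of $f$ that are not associate to $x$ correspond to the conjugacy classes of Puiseux series of $f$, and the only other irreducible factor that two elements could share is $x$ itself.

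With this in hand I would argue both implications by contraposition. If $f$ and $g$ share a Puiseux series $s$, then substituting $y=s(x)$ in the two factorizations and using that $u$ is a unit and $x^{\alpha_0}$ is nonzero in the Puiseux series field, some $g_i\mid f$ has $g_i(x,s(x))=0$ and some $g_j'\mid g$ has $g_j'(x,s(x))=0$; by the uniqueness above $g_i$ and $g_j'$ are associates, so $\gcd(f,g)\neq 1$. If instead $x\mid f$ and $x\mid g$, then trivially $x\mid\gcd(f,g)$. Conversely, if $f$ and $g$ are not coprime, a common irreducible factor is either associate to $x$ — a shared $x$ factor — or has a Puiseux root, which is then a common Puiseux series of $f$ and $g$.

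The only mildly delicate ingredient is the uniqueness assertion that an irreducible element of $\mathbb{C}\{x,y\}$ not associate to $x$ is pinned down up to a unit by a single Puiseux root (Weierstrass preparation plus the fact that the conjugates $\sigma_i^j(s_i)$ exhaust the roots of the associated Weierstrass polynomial); this is classical and already implicit in \eqref{eq:puiseux-factorization}, so I do not expect a genuine obstacle here — the proof is essentially bookkeeping on top of the Puiseux factorization theorem.
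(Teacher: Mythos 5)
Your argument is correct and is exactly the unpacking the paper has in mind: the paper simply states that the lemma ``follows from the definitions'' (i.e.\ from the Puiseux factorization theorem \eqref{eq:puiseux-factorization} and unique factorization in $\mathbb{C}\{x,y\}$), and your dictionary between non-$x$ irreducible factors and conjugacy classes of Puiseux roots, followed by contraposition, is the bookkeeping that makes that claim precise. No gap; this matches the intended route.
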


\begin{proposition} \label{prop:sqf-factors}
The set $S^{(i)}$ contains the square-free factors of $f^{(i)}_k$ for any $i \ge 0$ and any $k = 1, \dots, r$.
\end{proposition}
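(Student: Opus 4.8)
The plan is to argue by induction on $i$, proving the slightly more precise statement that the elements of $S^{(i)}$ attached to the index $k$ are exactly the square-free factors of $f^{(i)}_k$, where $f^{(i)}_k$ denotes the transform of $f_k$ under the first $i$ Newton changes of variables with the exceptional power of $x_i$ cleared at each step, just as $f^{(i)}$ is the transform of $\bar f = f^{(0)}$. The base case $i = 0$ is immediate, since $f^{(0)}_k = f_k$ and $S^{(0)}$ is defined to consist precisely of the square-free factors of the $f_k$'s.

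For the inductive step I would first record that a single Newton change of variables $\tau_i$ --- the substitution of $x_i$ and $y_i$ by the appropriate monomial-plus-parameter expressions in $x_{i+1}, y_{i+1}$ --- is a ring homomorphism $\mathbb{C}\{x_i, y_i\} \to \mathbb{C}\{x_{i+1}, y_{i+1}\}$, hence multiplicative, and that it sends units to units (both variables are replaced by positive powers of $x_{i+1}$ times a nonzero constant plus higher-order terms). Writing $f^{(i)}_k = u \prod_j (h^{(i)}_{j,k})^j$ with $u$ a unit by the inductive hypothesis, applying $\tau_i$ and then dividing by the power of $x_{i+1}$ that divides the image --- which, by additivity of $\operatorname{ord}_{x_{i+1}}$ over products, is exactly the sum of the powers dividing the images of the individual factors --- yields $f^{(i+1)}_k = u' \prod_j (h^{(i+1)}_{j,k})^j$, where each $h^{(i+1)}_{j,k}$ is formed from $h^{(i)}_{j,k}$ precisely by the rule defining $S^{(i+1)}$, and the factors that have become units are absorbed into the new unit $u'$. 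This exhibits $f^{(i+1)}_k$ as a product of the surviving $h^{(i+1)}_{j,k}$ with the prescribed exponents; what remains is to check that these factors are reduced and pairwise coprime, so that the factorization is the square-free one.

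Here I would invoke Lemmas \ref{lemma:puiseux-mult} and \ref{lemma:coprimes}. Reducedness: a Newton change of variables followed by clearing the exceptional divisor preserves the multiplicity of each Puiseux series --- this is Lemma \ref{lemma:puiseux-mult} applied to $h^{(i)}_{j,k}$ in place of $f^{(i)}$ --- and removes every factor of $x_{i+1}$; since $h^{(i)}_{j,k}$ is reduced, all of its Puiseux series have multiplicity one and it carries at most a simple factor of $x_i$, so $h^{(i+1)}_{j,k}$ has only multiplicity-one Puiseux series and no factor of $x_{i+1}$, hence is reduced. Coprimality: for $j \neq j'$ the factors $h^{(i)}_{j,k}$ and $h^{(i)}_{j',k}$ share no Puiseux series and no $x_i$ factor by Lemma \ref{lemma:coprimes}; the transformation restricts to an injection on Puiseux series (it identifies the series of the transform with those series of the original whose leading term matches the chosen side and root), so the transforms share no Puiseux series, and after clearing they share no $x_{i+1}$ factor, whence $h^{(i+1)}_{j,k}$ and $h^{(i+1)}_{j',k}$ are coprime by Lemma \ref{lemma:coprimes}. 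Finally the top surviving factor is a non-unit by construction, so the displayed factorization is genuinely the square-free factorization of $f^{(i+1)}_k$ (and if $f^{(i+1)}_k$ is a unit there is nothing to prove). In particular $S^{(i+1)}$ contains the square-free factors of $f^{(i+1)}_k$, completing the induction.

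The point I expect to require the most care is the bookkeeping of the powers of $x_{i+1}$: one must verify that clearing the exceptional divisor from a product equals the product of the clearings, and that a square-free factor carrying a factor of $x_i$ is handled correctly --- becoming a unit and being dropped when it is a pure power of $x_i$, and losing only its $x$-part otherwise --- so that the correspondence between $S^{(i+1)}$ and the square-free factors of $f^{(i+1)}_k$ is exact rather than a mere inclusion. Pinning down the statement that the Newton map induces a multiplicity-preserving bijection between the Puiseux series of a transform and the relevant subset of those of the original is where the argument relies on the development in \cite[1.5, 1.6]{Cas00}.
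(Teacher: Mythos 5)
Your proof follows essentially the same route as the paper's: induction on $i$, using Lemma \ref{lemma:puiseux-mult} to show the transformed factors stay reduced, Lemma \ref{lemma:coprimes} to show they stay pairwise coprime, and the persistence of the square-free factorization (Equation \ref{eq:sqf-factorization}) under the Newton change of variables. You spell out more of the bookkeeping that the paper leaves implicit (multiplicativity of the Newton map, the handling of units and of the exceptional powers of $x_{i+1}$, the injectivity of the substitution on Puiseux series) and aim at equality rather than mere containment, but the argument is the same in substance.
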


\begin{proof}
By induction on $i \ge 0$. By construction, $S^{(0)}$ contains the square-free factors of
$f^{(0)}_k := f^k$, for $k = 1, \dots, r$. Assume now that $S^{(i)}$ contains the square-free
factors of $f^{(i)}_k$.

If two elements of $h^{(i+1)}_{n, k}, h^{(i+1)}_{m, k}$ are not coprime, they would share a Puiseux
series or an $x$ factor, by Lemma \ref{lemma:coprimes}. The $x$ factor is not possible by definition of
$S^{(i+1)}$. If they share a Puiseux series $s^{(i+1)}$, $s^{(i)}$ would be a series of
$h^{(i)}_{n, k}$ and $h^{(i)}_{m, k}$, contradicting the induction hypothesis. Since $h^{(i)}_{j, k}$ is
reduced so is $h^{(i+1)}_{j, k}$, by Lemma \ref{lemma:puiseux-mult}. Since Equation \ref{eq:sqf-factorization} still holds
after applying the change of variables two both sides, the result follows.
\end{proof}

\begin{proposition} \label{prop:last-proposition}
Assume $s \in \mathbb{C}\langle\langle x \rangle\rangle$ has been separated from
the rest of the series of $\bar{f}$ at the $i$-th step of the algorithm. Then, $s$ is a
Puiseux series of $f_k \in \mathbb{C}\{x, y\}$ with algebraic multiplicity $j \in \mathbb{N}$ if and
only if $h^{(i)}_{j, k} \in S^{(i)}$.
\end{proposition}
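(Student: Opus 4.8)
The plan is to isolate a single equivalence and deduce the statement from it. Let $h_{1,k},h_{2,k},\dots$ denote the square-free factors of $f_k=f^{(0)}_k$, so that $f_k=\prod_m h_{m,k}^m$ up to a unit. Since the $h_{m,k}$ are pairwise coprime, hence share no Puiseux series by Lemma \ref{lemma:coprimes}, and each of them is reduced, a series $s\in\mathbb{C}\langle\langle x\rangle\rangle$ is a Puiseux series of $f_k$ of algebraic multiplicity $j$ precisely when $s$ is a Puiseux root of $h_{j,k}$ — a factor of $x$ possibly occurring inside some $h_{m,k}$ being immaterial, as $s$ is not the $x$-axis. (If $h_{j,k}$ happens to be a unit, both conditions fail, so we may assume it is a genuine square-free factor, in particular $h_{j,k}\in S^{(0)}$.) Thus it suffices to prove
\[
h^{(i)}_{j,k}\in S^{(i)}\iff s\text{ is a Puiseux root of }h_{j,k}.
\]

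First I would reduce membership in $S^{(i)}$ to non-vanishing at the origin. Under a Newton--Puiseux change of variables a unit stays a unit: its value at the origin is preserved and the normalizing exponent is then $0$; hence once a transform $h^{(i)}_{j,k}$ has been discarded it stays discarded, and since $h_{j,k}\in S^{(0)}$ we get $h^{(i)}_{j,k}\in S^{(i)}$ if and only if $h^{(i)}_{j,k}$ is a non-unit. Moreover the normalizing factor built into the definition of $S^{(i)}$ strips any $x_i$-factor, so for $i\ge 1$ the condition ``$h^{(i)}_{j,k}$ is a non-unit'' means exactly that it has a Puiseux root through the origin. Proposition \ref{prop:sqf-factors} is what guarantees that these transforms really are the square-free factors of $f^{(i)}_k$, in particular reduced, and Lemma \ref{lemma:puiseux-mult} underlies that this structure persists along the algorithm.

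Next comes the dictionary between the algorithm and Puiseux roots, which I expect to be the main obstacle. The changes of variables used here are the ones produced by running Newton--Puiseux on $\bar f$ and following $s$, the $i$-th of them encoding the $i$-th approximation to $s$; I must show that a Puiseux root $\gamma$ of $h_{j,k}$ survives the first $i$ substitutions as a branch through the origin of $h^{(i)}_{j,k}$ if and only if $\gamma$ agrees with $s$ in all the terms extracted during steps $1,\dots,i$. Unwinding the rational transformations of \cite[\S 1.5]{Cas00}, a differing leading exponent or coefficient of $\gamma$ turns, after substitution, either into a unit contribution or into an $x_i$-factor that is then removed, which is what makes this precise. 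Granting it, one direction of the boxed equivalence is immediate: if $s$ is itself a Puiseux root of $h_{j,k}$ then it trivially agrees with $s$ through step $i$, so $h^{(i)}_{j,k}$ is a non-unit and lies in $S^{(i)}$.

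For the converse I would invoke the separation hypothesis. If $h^{(i)}_{j,k}\in S^{(i)}$, the dictionary produces a Puiseux root $\gamma$ of $h_{j,k}$ agreeing with $s$ in all the terms extracted through step $i$; since $h_{j,k}$ divides $f_k$, which divides $f$, and $\bar f$ is the reduced part of $f$ and so has the same Puiseux series as $f$, the series $\gamma$ is one of the Puiseux series of $\bar f$. But ``$s$ separated at step $i$'' means exactly that $s$ is the only Puiseux series of $\bar f$ sharing with $s$ all the terms extracted through step $i$; therefore $\gamma=s$, so $s$ is a Puiseux root of $h_{j,k}$, which finishes the equivalence.
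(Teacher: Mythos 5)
Your proof is correct and follows essentially the same route as the paper: you reduce the claim to a statement about roots of the square-free factor $h_{j,k}$, use the persistence of Puiseux roots under the Newton--Puiseux substitutions (the content of Lemma \ref{lemma:puiseux-mult}, which the paper invokes directly rather than re-deriving your ``dictionary'' by hand), Proposition \ref{prop:sqf-factors} for the square-free structure at step $i$, and the separation hypothesis for the converse. The only stylistic difference is that you make the intermediate equivalence ``$h^{(i)}_{j,k}\in S^{(i)}$ iff $s$ is a root of $h_{j,k}$'' explicit and spend a paragraph re-justifying the survival-of-roots fact, whereas the paper compresses this into a citation of \cite[1.6.3]{Cas00}.
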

\begin{proof}
For the direct implication, assume that $s$ is a Puiseux series of $f_k$ with multiplicity $j \in \mathbb{N}$. Then,
$s$ is a Puiseux series of $h^{(0)}_{j, k} \in S^{(0)}$ and no other square-free factor,
by Lemma \ref{lemma:coprimes}. Now, by Lemma \ref{lemma:puiseux-mult}, $s^{(i)}$ is a root of $h^{(i)}_{j, k}$ and
it belongs to $S^{(i)}$ because it is a non-unit. For the converse, since $s$ has been separated, $f^{(i)}$ has no other
Puiseux series other than $s^{(i)}$ and its conjugates. By Proposition \ref{prop:sqf-factors}, there must be a
unique $h^{(i)}_{j, k}$ square-free factor of $f_k^{(i)}$ in $S^{(i)}$. Finally, by Lemma \ref{lemma:puiseux-mult}, if
the algebraic multiplicity of $s^{(i)}$ is $j > 0$ in $f^{(i)}_k$, so is the algebraic multiplicity of $s$ in $f_k$.
\end{proof}

It follows from Proposition \ref{prop:last-proposition} that, when the algorithm
stops at the $i$-th step after $s$ has been separated, the set $S^{(i)}$ contains the information about
the factors and the algebraic multiplicities of the Puiseux series $s$.

\subsection{Implementation details}

The algorithms discussed in this paper have been implemented in the computer algebra systems
{\tt Macaulay2} \cite{M2} and {\tt Magma} \cite{Magma} and they are available at
\begin{center}
\url{https://github.com/gblanco92/}.
\end{center}

\vskip 2mm

The {\tt Macaulay2} implementation uses floating point arithmetic
to compute the Puiseux series. This could potentially give inaccurate computations and wrong
results so we implemented the same algorithms in {\tt Magma} using algebraic field extensions. So far, all the
examples the authors have tested give the same result in both implementations.

\vskip 2mm

As usual when developing algorithms in computer algebra, we have to work with polynomials
in $\mathbb{C}[x, y]$ instead of series in $\mathbb{C}\{x, y\}$. If we take a close look at all
the steps of Algorithm \ref{A1} and the new formulation of the Newton-Puiseux algorithm we see that
this is not an issue. Indeed, it is not a problem for the traditional Newton-Puiseux algorithm.
Also, the square-free decomposition of elements of $\mathbb{C}[x, y]$ is an standard tool in computer algebra
and can be computed efficiently, see for instance \cite{Yun76}. We also point out that, given a reduced polynomial
$f \in \mathbb{C}[x, y]$, it remains reduced when viewed in $\mathbb{C}\{x, y\}$ (see \cite{chevalley}). Moreover,
a greatest common divisor in the polynomial ring is still a greatest common divisor in the convergent series ring.

\vskip 2mm

The fact that the new Newton-Puiseux algorithm works with the square-free factors of the generators,
which are reduced and generally of smaller degree than the original polynomials, means that the computation
can be kept efficient. Working with non-reduced elements would increase significantly the computational time
of the Newton-Puiseux algorithm.

\vskip 2mm

Finally, we would like to mention that Algorithm \ref{A1} is one of the key
ingredients of the method we develop in \cite{ACAMB16-2} to compute the
integral closure of any ideal $\fa \subseteq \mathbb{C}\{x, y\}$.
We hope that these algorithms can be useful to people interested in the
computational aspects of singularity theory. For example, our methods
are very helpful in the effective computation
of {\it multiplier ideals} (see \cite{ACAMDC} and \cite{BDC16}).

\end{document}